\begin{document}
\newtheorem{defin}{Definition}
\newtheorem{theorem}{Theorem}[section]
\newtheorem*{theorem*}{Theorem}
\newtheorem{sled}{Corollary}[section]
\newtheorem*{sled*}{Corollary}
\newtheorem{lemma}{Lemma}[section]
\theoremstyle{remark}
\newtheorem{rem}{\bf Remark}[section]
\newtheorem{ex}{\bf Example}[section]
\renewcommand{\baselinestretch}{1.3}
\newcommand{\tc}{\text{,}}
\newcommand{\tp}{\text{.}}
\date{ }
\makeatletter
\newcommand*{\ralph}[1]{\@ralph{\@nameuse{c@#1}}}
\newcommand*{\@ralph}[1]%
{\ifcase #1\or а\or б\or в\or г\or д\or е\or ж\or з\or и\or
	к\or л\or м\or н\or о\or п\or р\or с\or т\or у\or
	ф\or х\or ц\or ч\or ш\or щ\or э\or ю\or
	я\else\@ctrerr \fi}

\begin{frontmatter}
\title{{\bf  Generalized Limit Theorems For $U$-max Statistics}}

\bigskip

\author[a]{Nikitin Ya. Yu. \fnref{fn1}}

\author[a,b]{\,Simarova E. N. \fnref{fn2}}

\address[a]{ Department of Mathematics and Mechanics, Saint-Petersburg State University,\\ \phantom{$^1$} 
  Universitetsky pr. 28,Stary Peterhof 198504, Russia}

\address[b]{ Leonhard Euler International Mathematical Institute (SPbU Department),\\ \phantom{$^1$ }14th Line 29B, Vasilyevsky Island, St. Petersburg, 199178,
Russia}

\ead{katerina.1.14@mail.ru}

\fntext[fn1]{Research of  the author supported by joint grant RFBR-DFG No. 20-51-12004.}
\fntext[fn2]{Research of the author supported by Ministry of Science and Higher Education of the Russian Federation, agreement No. 075-15-2019-1619.}

\begin{abstract}
{\small
U-max statistics were introduced by Lao and Mayer in 2008. Instead of averaging the kernel over all possible subsets of the original sample, they considered the maximum of the kernel. Such statistics are natural in stochastic geometry. Examples are the maximal perimeters and areas of polygons and polyhedra formed by random points on a circle, ellipse, etc. The main method to study  limit theorems for  U-max statistics is a Poisson approximation. In this paper we consider a general class of kernels defined on a circle, and we prove a universal limit theorem with the Weibull distribution as a limit. Its parameters depend on the degree of the kernel, the structure of its  points of  maximum and the Hessians of the kernel at these points. Almost all limit theorems  known so far may be obtained as  simple special cases of our general theorem. We also consider several new examples. Moreover, we consider not only the uniform distribution of points but also almost arbitrary distribution on a circle satisfying mild additional conditions.}
\end{abstract}

\vskip10pt
\begin{keyword}
 Weibull distribution\sep Poisson approximation\sep $U$-max statistics\sep random perimeter\sep random area.

\MSC[2010] 60D05 \sep 60F05 \sep 60G70.
\end{keyword}
\vskip25pt
\end{frontmatter}

\section{Introduction}

$U$-statistics were introduced in probability by Halmos \cite{n1} and Hoeffding \cite{n2} in the mid-40s as a functional generalization of  sample mean. Let $ \xi_1, \xi_2, \dots $  be a sequence of independent identically distributed random elements taking values in a measurable space $(\mathfrak{X}, \mathfrak{A})$.  We define a real-valued symmetric Borel function $ f(x_1, \dots, x_m) $ on the space $ {\mathfrak{X}}^m, $ which we call a kernel of degree $ m. $

  $ U$-statistics are defined as follows:
\begin{equation}
\label{def}
U_n = {\binom{n}{m}}^{-1}\sum\limits_J f(\xi_{i_1}, \ldots, \xi_{i_m}),
\end{equation}
where $n \ge m$ and the set $$J = \{(i_1, \ldots , i_m) : 1 \le i_1 < \ldots < i_m \le n\}.$$
 Over the past decades, $U$-statistics have been studied in detail in many publications, and the state of the art is presented in monographs \cite{n3} and \cite{n4}.

In 2008 Lao and Meyer \cite{n5}, \cite{n6}, \cite{n9} independently considered the so-called $U$-$\max$ statistics which are obtained from (\ref{def}) as follows: the normalizing factor is removed and the sum is replaced by the maximum over the  set  $J:$
\begin{equation}
\label{def2}
H_n = \max\limits_J f(\xi_{i_1}, \ldots, \xi_{i_m}).
\end{equation}
 The $U$-$\min$ statistics are defined in a similar way. Such statistics often arise in stochastic geometry. The classical $U$-statistics are also used there, see, e.g., \cite{LR}.

Lao and Meyer studied mainly the limit behavior of maximal and minimal distances, areas and perimeters of figures formed by random points on a circle and sphere. The  geometric figure is determined by the kernel $ f. $ They used the Poisson approximation from the monograph \cite{n7} and the paper \cite{n8}, and proved a number of theorems on convergence to a Weibull distribution.

Here is a typical example from \cite{n5}, which gives an impression about such  limit theorems.

\begin{theorem} \label{theorem_A} Let $U_1, U_2, \ldots$ be independent and
uniformly distributed points on the unit circle
$S^1,\, \textup{peri}(U_i,U_j,U_l)$ be the perimeter of  triangle
formed by a triple of points $U_i,U_j,U_l$, \,
$H_n=\max\limits_{1\leq i<j<l\leq n}\textup{peri}(U_i,U_j,U_l).$  Then for any $t>0$
$$\lim\limits_{n\rightarrow\infty}\mathbb{P}\{n^3(3\sqrt{3}-H_n)\leq
t\}=1-\exp\left\{-\frac{2t}{9\pi}\right\}.$$
\end{theorem}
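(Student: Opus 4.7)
The plan is to recast the event $\{H_n<3\sqrt{3}-t/n^3\}$ as $\{W_n=0\}$, where
\[W_n=\sum_{1\le i<j<l\le n}\xi_{ijl},\qquad \xi_{ijl}=\mathbf{1}\bigl\{\mathrm{peri}(U_i,U_j,U_l)\ge 3\sqrt{3}-t/n^3\bigr\},\]
and to apply the Chen--Stein Poisson approximation (as in \cite{n7}, \cite{n8}) to show that $\mathcal{L}(W_n)$ is close in total variation to $\mathrm{Poi}(\lambda_n)$ with $\lambda_n\to\lambda:=2t/(9\pi)$. The theorem then follows from $P(W_n=0)=e^{-\lambda_n}+o(1)\to e^{-\lambda}$ by complementation.

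First I would compute the intensity $\lambda_n=\mathbb{E}W_n$. By rotational invariance one may fix $U_1$ at angle $0$ and parametrise the circle by the three consecutive arcs $\alpha,\beta,\gamma\ge 0$ with $\alpha+\beta+\gamma=2\pi$, so that $\mathrm{peri}=2\sin(\alpha/2)+2\sin(\beta/2)+2\sin(\gamma/2)$. This is uniquely maximised (modulo relabelling) at $\alpha=\beta=\gamma=2\pi/3$, with value $3\sqrt{3}$. Putting $x=\alpha-2\pi/3$, $y=\beta-2\pi/3$, the gradient vanishes and the Hessian is negative definite; a direct computation gives
\[3\sqrt{3}-\mathrm{peri}=\tfrac{\sqrt{3}}{4}(x^2+xy+y^2)+O\bigl((|x|+|y|)^3\bigr).\]
The Lebesgue area of the ellipse $\tfrac{\sqrt{3}}{4}(x^2+xy+y^2)\le\varepsilon$ equals $8\pi\varepsilon/3$. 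Accounting for the two symmetric maxima in the $(U_2,U_3)$-plane (swap $U_2\leftrightarrow U_3$), dividing by $(2\pi)^2$, and multiplying by $\binom{n}{3}\sim n^3/6$ with $\varepsilon=t/n^3$ yields $\lambda_n\to 2t/(9\pi)$.

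Next I would verify the Chen--Stein dependency bounds. Only triples $J'$ sharing at least one index with $J=\{i,j,l\}$ are dependent on $\xi_J$, and the neighbourhood $B_J$ has $|B_J|=O(n^2)$. Since $\mathbb{E}\xi_J=O(n^{-3})$, one has $b_1=O(n^3)\cdot O(n^2)\cdot O(n^{-6})=O(n^{-1})$. For $b_2$: pairs of triples sharing exactly one vertex factor conditionally on that vertex into independent near-max events, so the joint probability is $O(\varepsilon^2)=O(n^{-6})$ and the $O(n^5)$ such pairs contribute $O(n^{-1})$; pairs sharing two vertices are coupled in one direction and a short three-fold integration gives joint probability $O(\varepsilon^{3/2})=O(n^{-9/2})$, so the $O(n^4)$ such pairs contribute $O(n^{-1/2})$. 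Thus $b_1+b_2\to 0$.

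The main obstacle is the local analysis underlying these estimates: one must verify that every near-maximising triple is a small quadratic perturbation of an equilateral configuration (no other critical sets contribute and the simplex boundary is away from the maximum), and uniformly control the cubic Taylor remainder on the $O(\sqrt{\varepsilon})$-scale. Granted this, Chen--Stein delivers $P(W_n=0)\to\exp(-2t/(9\pi))$, and complementation produces the Weibull limit stated in the theorem.
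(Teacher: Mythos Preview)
Your proposal is correct and follows essentially the same route as the paper. The paper packages the Poisson approximation through the Silverman--Brown criterion (Theorem~\ref{SB}, conditions \eqref{fff1} and \eqref{zz1}) rather than invoking Chen--Stein directly, but the two substantive ingredients coincide: the local Taylor expansion at the equilateral configuration giving $p_{n,z}\sim K\varepsilon$ (your ellipse-area computation is exactly the $m=3$, $g(x)=2\sin(x/2)$ instance of Theorem~\ref{tt7} and the Hessian calculation in its proof), and the bound on the joint probability for two overlapping near-maximal triples, which is your $b_2$ and the paper's Lemma~\ref{ll5} on $\tau_{n,z}(r)$, both yielding the $O(n^{-1/2})$ rate from the $r=2$ term.
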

We note that among all triangles inscribed in the unit circle, the regular triangle has the maximal value of perimeter equal to $3\sqrt{3},$  see, e.g., \cite{n17}. It is clear that the maximal perimeter of a random triangle (which we denoted by $H_n$) tends to this value. The theorem indicates the normalization  necessary for this convergence and describes the limit distribution.

\medskip

Lao and Mayer  studied the kernels of low degrees, e.g.,  the area and perimeter of random triangles.  Koroleva and Nikitin considered $U$-$\max$ statistics of more complicated nature (see \cite{n14}). In particular, they considered the maximal perimeter among all  perimeters of convex $ m$-gons, where random vertices  are chosen from $ n $ independent points uniformly distributed on a circle. This was generalized in another direction in the papers \cite{n21} and \cite{n22}, where a generalized perimeter of  random convex polygon was considered.

In all these papers, the uniform distribution of  points on a unit circle was considered.
More general distributions were used by Lao and Mayer for some particular two-dimensional kernels, namely for the distances between points and the scalar product of two position vectors in \cite{n5}, \cite{n6}, \cite{n9}.  More complicated  kernels, namely, the areas and perimeters of inscribed polygons with more general conditions on  the distribution of  vertices were studied in \cite{n20}. A common feature of all papers    was that there were  investigated specific particular cases of $ U$-$\max$ kernels and $ U$-min statistics.

This paper is devoted to a significant generalization of known limit theorems for $U$-$\max$ and $U$-$\min$ statistics. We consider an almost arbitrary distribution of points on a circle, as well as a wide and general class of smooth kernels   with a natural structure of the set of extreme points. In this formulation, the limit behavior is determined by the degree of the kernel $ f, $ by the Hessian of the kernel at the maximal points  and also by the distribution of the points on the circle. The general formulas are used for kernels of  special type with convexity properties.
Most of the known before results can be deduced from the general theorem  but we also provide some new examples.

\section{Structure of the paper}

Lao and Mayer applied the Poisson approximation from the monograph \cite{n7}. This is still the main research method in this field. The following theorem plays a key role in studying the limit behavior of $U$-$\max$ statistics.

\begin{theorem}\cite{n7}. \label{LM}
Let $ \xi_1, \xi_2, \dots, \xi_n $  be a sequence of independent identically distributed random elements taking values in a measurable space $(\mathfrak{X}, \mathfrak{A})$ and function $f(x_1, \dots, x_m)$ be a real-valued symmetric Borel function, $f :{\mathfrak{X}}^m\rightarrow\mathbb{R}.$
Let $H_n=\max_J h(\xi_{i_1}, \ldots, \xi_{i_m})$ be the $U$-$\max$ statistics introduced in \eqref{def2} and define for any $z \in \mathbb{R}$  the following quantities:
\begin{align*}
p_{n,z}&=\mathbb{P}\{f(\xi_1,\ldots,\xi_m)>z\}, \, \,\,
\lambda_{n,z}={ n \choose m} p_{n,z},\\
\tau_{n,z}(r)&=\frac{\mathbb{P}\{f(\xi_1, \ldots,\xi_m)>z, f(\xi_{1+m-r},\xi_{2+m-r}, \ldots, \xi_{2m-r})>z\}}{p_{n,z}}.
\end{align*}

Then for all $n \ge m$ and for each $z \in \mathbb{R}$  we have
\begin{align}
&\nonumber|\mathbb{P}(H_n \le z)-e^{-\lambda_{n,z}}| \\
&\le \left( 1- e^{-\lambda_{n,z}} \right) \cdot \left[ p_{n,z}\left({ n \choose m} - { n-m \choose m} \right)+\sum_{r=1}^{m-1}{ m \choose r} { n-m \choose m-r} \tau_{n,z}(r) \right].
\label{lm1}
\end{align}
\end{theorem}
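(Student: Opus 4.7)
The plan is to recognize this as a direct application of the Chen--Stein Poisson approximation bound for dependency-graph structures, with the only nontrivial work being bookkeeping for indices that share some entries.

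First I would translate the event of interest into a count: set $I_\alpha = \mathbf{1}\{f(\xi_{i_1},\dots,\xi_{i_m})>z\}$ for each $\alpha=\{i_1,\dots,i_m\}\in J$, and let $W=\sum_{\alpha\in J}I_\alpha$. Then $\{H_n\le z\}=\{W=0\}$, and by the i.i.d.\ assumption $\mathbb{E}I_\alpha=p_{n,z}$ for every $\alpha$, so $\mathbb{E}W=\binom{n}{m}p_{n,z}=\lambda_{n,z}$. The key structural observation is that $I_\alpha$ and $I_\beta$ are independent whenever $\alpha\cap\beta=\emptyset$, so the natural dependency neighborhood of $\alpha$ is $B_\alpha=\{\beta\in J : \beta\cap\alpha\ne\emptyset\}$.

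Next I would invoke the Chen--Stein inequality (precisely in the form given in \cite{n7}): for a sum of Bernoulli indicators with such a dependency graph,
\begin{equation*}
\bigl|\mathbb{P}(W=0)-e^{-\lambda}\bigr|\le \frac{1-e^{-\lambda}}{\lambda}(b_1+b_2),
\end{equation*}
where $b_1=\sum_{\alpha}\sum_{\beta\in B_\alpha}\mathbb{E}I_\alpha\,\mathbb{E}I_\beta$ and $b_2=\sum_{\alpha}\sum_{\beta\in B_\alpha,\,\beta\ne\alpha}\mathbb{E}(I_\alpha I_\beta)$. The remaining work is to evaluate these two sums explicitly. For $b_1$, the number of $m$-subsets meeting a fixed $\alpha$ is $\binom{n}{m}-\binom{n-m}{m}$, so $b_1=\binom{n}{m}\bigl[\binom{n}{m}-\binom{n-m}{m}\bigr]p_{n,z}^{2}$ and therefore $b_1/\lambda_{n,z}=\bigl[\binom{n}{m}-\binom{n-m}{m}\bigr]p_{n,z}$, which is precisely the first term in the claimed bound.

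For $b_2$ I would stratify $\beta\ne\alpha$ by the overlap size $r=|\alpha\cap\beta|\in\{1,\dots,m-1\}$: the number of such $\beta$ is $\binom{m}{r}\binom{n-m}{m-r}$, and by the symmetry and exchangeability of the kernel, $\mathbb{E}(I_\alpha I_\beta)$ depends only on $r$ and equals $\mathbb{P}\{f(\xi_1,\dots,\xi_m)>z,\,f(\xi_{1+m-r},\dots,\xi_{2m-r})>z\}=p_{n,z}\tau_{n,z}(r)$. Summing over $\alpha\in J$ contributes the overall factor $\binom{n}{m}$, which cancels against the $1/\lambda_{n,z}$ in the Chen--Stein bound up to $p_{n,z}/p_{n,z}$, giving $b_2/\lambda_{n,z}=\sum_{r=1}^{m-1}\binom{m}{r}\binom{n-m}{m-r}\tau_{n,z}(r)$, the second term in \eqref{lm1}. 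Adding the two contributions and multiplying by $1-e^{-\lambda_{n,z}}$ yields the stated inequality.

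I do not foresee a genuine technical obstacle here, since the Chen--Stein estimate is being quoted from \cite{n7}; the only point demanding attention is the combinatorial reduction of $\mathbb{E}(I_\alpha I_\beta)$ to a single canonical probability $\tau_{n,z}(r)$ for each overlap size, and making sure the overlap $r=m$ (i.e.\ $\beta=\alpha$) is correctly excluded from $b_2$ so that the sum begins at $r=1$ and ends at $r=m-1$.
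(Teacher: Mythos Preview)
The paper does not give its own proof of this theorem: it is stated with a citation to \cite{n7} and used as a quoted tool, so there is no ``paper's proof'' to compare against. Your sketch is correct and is exactly the derivation that underlies the cited result: recast $\{H_n\le z\}$ as $\{W=0\}$ for $W=\sum_{\alpha\in J}I_\alpha$, use the dependency-graph Chen--Stein bound $|\mathbb{P}(W=0)-e^{-\lambda}|\le\frac{1-e^{-\lambda}}{\lambda}(b_1+b_2)$ (with $b_3=0$ since $I_\alpha$ is genuinely independent of $\{I_\beta:\beta\cap\alpha=\emptyset\}$), and then do the overlap bookkeeping you describe to identify $b_1/\lambda$ and $b_2/\lambda$ with the two terms in \eqref{lm1}.
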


\begin{rem}\cite{n7}
\label{zz2}
 If the sample size $n$ tends to infinity, then the right-hand side in \eqref{lm1} is of asymptotic order
$$O\left(p_{n,z}n^{m-1}+
\sum_{r=1}^{m-1}\tau_{n,z}(r)n^{m-r}\right),$$ where for $m>1$  the first term is negligibly small with respect  to the sum.
\end{rem}

Silverman and Brown  \cite{n8}  have found the conditions for a general theorem used in \cite{n7} yielding a non-trivial Weibull law in the limit.

\begin{theorem} \cite{n8}
\label{SB} Let the conditions of  Theorem \textup{\ref{LM}} be satisfied. If, for some sequence of transformations $z_n: T \rightarrow \mathbb{R},\, T \subset \mathbb{R},$  the following equalities:
\begin{align}
\label{fff1}
&\lim_{n \rightarrow \infty}\lambda_{n, z_n(t)} = \lambda_t >0,\\
\label{fff2}
 &\lim_{n \rightarrow \infty} n^{2m-1} p_{n,z_n(t)}\tau_{n,z_n(t)}(m-1)=0
\end{align}
 hold for each $t \in T$, then
\begin{align}
\label{fff3}\lim_{n \rightarrow \infty} \mathbb{P}\left(H_n \le z_n(t)\right)=e^{-\lambda_t}
\end{align}
for all $t \in T.$
\end{theorem}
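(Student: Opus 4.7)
The plan is to read the conclusion off Theorem \ref{LM}: once I fix $z = z_n(t)$, the theorem already tells me that $\mathbb{P}(H_n \le z_n(t))$ is approximated by $e^{-\lambda_{n,z_n(t)}}$ with an explicit error bound, and assumption \eqref{fff1} together with continuity of $x \mapsto e^{-x}$ gives $e^{-\lambda_{n,z_n(t)}} \to e^{-\lambda_t}$. So the entire task reduces to proving that the right-hand side of \eqref{lm1} tends to zero along the sequence $z = z_n(t)$. The factor $(1 - e^{-\lambda_{n,z_n(t)}})$ stays bounded by $1$, so it suffices to show that the bracket vanishes.

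Next, I would extract the key asymptotic from \eqref{fff1}, namely $p_{n,z_n(t)} \sim \lambda_t\, m!\, n^{-m}$, and use it on the ``pairwise'' term. Since ${n \choose m} - {n-m \choose m} = O(n^{m-1})$, one immediately gets
\begin{equation*}
p_{n,z_n(t)}\Bigl[{n \choose m} - {n-m \choose m}\Bigr] = O(n^{-1}) \to 0,
\end{equation*}
in agreement with the remark that the first term is negligible for $m>1$. This step is essentially bookkeeping with binomial coefficients.

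The real content of the proof is controlling the sum $\sum_{r=1}^{m-1} {m \choose r} {n-m \choose m-r} \tau_{n,z_n(t)}(r)$ using only the single hypothesis \eqref{fff2} on $\tau_{n,z_n(t)}(m-1)$. The natural way forward is the monotonicity bound $\tau_{n,z}(r) \le \tau_{n,z}(m-1)$ for every $r \in \{1,\ldots, m-1\}$, reflecting the fact that increasing the overlap between the two $m$-tuples can only increase the conditional probability of a joint excursion; this can be established by conditioning on the $r$ shared coordinates and comparing the two conditional probabilities. Given such a bound, ${n-m \choose m-r} = O(n^{m-r})$ leads to the estimate
\begin{equation*}
\sum_{r=1}^{m-1} {m \choose r} {n-m \choose m-r} \tau_{n,z_n(t)}(r) = O\bigl(n^{m-1}\, \tau_{n,z_n(t)}(m-1)\bigr),
\end{equation*}
the largest contribution coming from $r=1$. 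Rewriting and using $p_{n,z_n(t)} \sim \lambda_t\, m!/n^m$,
\begin{equation*}
n^{m-1}\, \tau_{n,z_n(t)}(m-1) = \frac{n^{2m-1}\, p_{n,z_n(t)}\, \tau_{n,z_n(t)}(m-1)}{n^{m}\, p_{n,z_n(t)}} = \frac{o(1)}{O(1)} \to 0
\end{equation*}
by \eqref{fff2}. Combined with the previous paragraph, the bracket in \eqref{lm1} vanishes and \eqref{fff3} follows.

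The delicate step is clearly the monotonicity-type inequality on $\tau_{n,z}(r)$: assumption \eqref{fff2} directly handles only the maximally overlapping case $r=m-1$, but the binomial coefficient ${n-m \choose m-r}$ is largest for small $r$, so without some domination the $r=1$ term could ruin the estimate. Any proof proposal must either justify the monotonicity (which is the cleanest route, used implicitly in \cite{n8} and \cite{n7}) or, failing that, estimate each $\tau_{n,z_n(t)}(r)$ by hand via conditioning on the shared variables. Everything else in the argument is asymptotic counting.
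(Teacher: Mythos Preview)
The paper does not give its own proof of Theorem~\ref{SB}: the result is quoted from Silverman and Brown \cite{n8} (with the error bound from \cite{n7}), so there is no in-paper argument to compare against. In the applications the authors actually sidestep the monotonicity issue entirely: via Remark~\ref{rem2} they replace \eqref{fff2} by the stronger family of conditions \eqref{zz1} for every $r\in\{1,\dots,m-1\}$ and then verify each of these directly (Lemma~\ref{ll5}).

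Your derivation is the standard one and is correct. The crucial inequality $\tau_{n,z}(r)\le\tau_{n,z}(m-1)$ does hold in general: writing $q_r(y_1,\dots,y_r)=\mathbb{P}\{f(\xi_1,\dots,\xi_{m-r},y_1,\dots,y_r)>z\}$ one has $p_{n,z}\tau_{n,z}(r)=\mathbb{E}[q_r^2]$ by conditional independence, and since $q_r$ is the conditional expectation of $q_{r+1}$ over one extra coordinate, Jensen's inequality gives $\mathbb{E}[q_r^2]\le\mathbb{E}[q_{r+1}^2]$, hence $\tau_{n,z}(r)$ is nondecreasing in $r$. This is exactly the ``conditioning on the shared coordinates'' you allude to. One small point of notation: in your last display the denominator $n^m p_{n,z_n(t)}$ is not merely $O(1)$ but in fact converges to $m!\,\lambda_t>0$ by \eqref{fff1}; you need it bounded \emph{away from zero} to conclude that the ratio tends to $0$, and you already recorded this asymptotic earlier, so the argument is complete.
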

\begin{rem} \cite{n7}.
\label{rem2}
Condition \eqref{fff1} implies $p_{n,z}=O(n^{-m}).$ Therefore according to Remark \ref{zz2}   the rate of convergence in \eqref{fff3} is $$O\left(n^{-1}+\sum_{r=1}^{m-1}n^{2m-r}p_{n,z}\tau_{n,z}(r)\right).$$
Hence, for $m \ge 2$  condition \eqref{fff2} can be replaced by
\begin{align}
\label{zz1}
\lim_{n \rightarrow \infty}n^{2m-r}p_{n,z}\tau_{n,z}(r)=0 \text { for any } r \in \{1, \ldots, m-1\}.
\end{align}
\end{rem}

We will use the above assertions to prove our main results about the limit behavior of $U$-$\max$ statistics. Our paper consists of several parts. First, in Section \ref{tt1}, we will introduce the basic notation and restrictions, and then in Section \ref{form} we will formulate the general limit relations we have obtained.

Further, in Sections \ref{edg1} and \ref{edg2} we apply our results to the some specific classes of kernels. For these classes,  more explicit and relatively simple
limit theorems for $ U$-$\max$ statistics of geometric nature will be obtained. Some interesting examples will also be given.  A detailed proof of the main result of Section \ref{form} is rather painstaking and is of considerable length. Therefore, we placed it at the end of the paper; it occupies Sections \ref{doc} and  \ref{cont}.

\section{Preliminaries}
\label{tt1}

In this section we introduce necessary conditions and definitions.  We consider $ U$-$\max$ statistics with a fixed kernel $f$ depending on a set of $m $ points $ U_1,\ldots, U_m $ lying on the unit circle $ S^1 $ with center $ O $ at the origin, i.e.,
$$
f:\left(S^1\right)^m \rightarrow \mathbb{R} \cup \{-\infty\}.
$$

Denote by $ \beta_i $ the angle between the vectors $ OU_1 $ and  $ OU_{i + 1} $ (taken counterclockwise). We  call such angles central. In this way,
\begin{align}
\label{bb1}
\beta_i=\angle U_{1}OU_{i+1}.
\end{align}

Sometimes for the sake of brevity we will use the notation \begin{align}
\label{bet1}
\beta=(\beta_1, \ldots, \beta_{m-1}) \in [0, 2\pi)^{m-1}.
\end{align}

All angles that appear in this paper are considered modulo $ 2 \pi. $ All algebraic operations involving several angles are also considered modulo $ 2 \pi, $  unless otherwise stated.

Now we give some conditions that will be used  in the sequel.

{\bf A.  Conditions on the kernel $ f $.}
\label{tt2}

{\bf A1.} \ Function $ f $ is invariant with respect to rotations. Equivalently, this means that  function $ f $ can be written in the form
 $$
f(U_1, \ldots, U_m)= h(\beta_1, \ldots, \beta_{m-1})=h(\beta),
$$
where $\beta_i$ are  central angles, and $h$ is a function   $$h:[0,2\pi)^{m-1} \rightarrow \mathbb{R} \cup \{-\infty\}.$$

{\bf A2.} \ Function $ f $ cannot be changed after any permutation of the points $ U_1, \ldots, U_m. $ Therefore, the function $ h $ is also a symmetrical function of its arguments.

{\bf A3.} \ Function $ h $ is continuous and can be continuously extended to a function $h:[0,2\pi]^{m-1} \rightarrow \mathbb{R} \cup \{-\infty\}.$

{\bf A4.} \ Function $ h $ reaches its maximal value $ M $ and this maximum is realized only at a finite number of points $ V_1, \ldots, V_k \in [0, 2 \pi]^{m-1}. $ It is assumed that all these points do not lie on the boundary of the domain of definition of  function $ h $. In other words, $ V^j_i \in (0, 2 \pi) \text{ for all } \,  \, i \in \{1, \ldots, k\}, j \in \{1, \ldots, m-1\}, $ where $ V ^ j_i $ is the $ j$-th component of the point $ V_i. $

Condition A4 together with Condition A2 allows us to make the following conclusion about the structure of the  points of maximum of  function $ f $: there is only a finite number of points (up to rotations) where the maximal value of function $f$ is attained.  Moreover, all these points do not have matching components.

{\bf A5.} \ There exists $ \delta> 0 $ such that  function $ h $ is three times continuously differentiable in the $ \delta$-neighborhood of any maximum point $ V_i, $ \, $ i \in \{1, \ldots, k \} $.

{\bf A6.} \  Consider the Hessian matrix $ G_i $ of the  form: $$G_i=
\begin{pmatrix}
\frac{\partial^2 h(V_i)}{\partial^2 x_1} & \frac{\partial^2 h(V_i)}{\partial x_1 \partial x_2 } & \ldots & \frac{\partial^2 h(V_i)}{\partial x_1 \partial x_{m-1}} \\
\frac{\partial^2 h(V_i)}{\partial x_1 \partial x_2} & \frac{\partial^2 h(V_i)}{\partial^2 x_2 } & \ldots & \frac{\partial^2 h(V_i)}{\partial x_2 \partial x_{m-1}} \\
\vdots & \vdots & \ddots & \vdots \\
\frac{\partial^2 h(V_i)}{\partial x_{m-1} \partial x_1} & \frac{\partial^2 h(V_i)}{ \partial  x_{m-1}\partial x_2 } & \ldots & \frac{\partial^2 h(V_i)}{\partial^2 x_{m-1}} \\
\end{pmatrix}
.
 $$

We require that for all $ i \in \{1, \ldots, k \} $ the condition $$ \det(G_i) \ne 0 $$ holds.

\medskip

{\bf B. Conditions on the distribution of points.}
\label{rt}

 {\bf B1.} \ The random points $ U_1, \ldots, U_n $ are independently distributed on the unit circle $ S^1 $ with the same probability density $ p(x). $

 {\bf B2.} \ The density $p$ is continuous (therefore, it can be considered as a non-negative continuous $ 2 \pi $-periodic function  $p: \mathbb{R} \rightarrow \mathbb{R_{+}}$ such that $ \int \limits_0^{2 \pi} p(x) = 1 $).

{\bf B3.} \ There exists at least one maximal point of the kernel (which we denote by $V_*$) such that
$$\int\limits_{0}^{2\pi} \left[p(x)\prod\limits_{l=1}^{m-1} p(x+V_{*}^l)\right] \, dx \ne 0.$$

Similar conditions on $p(x)$ arose in  \cite{n20}.

\begin{rem}
The conditions imposed on the density $p$ are not too restrictive. For example, continuous densities separated from zero or continuous densities taking the value 0 only on a set of measure less than $ \frac{2 \pi}{m} $ are suitable for these conditions. A useful example of non-uniform distribution, is the von Mises distribution (see, e.g., \cite{DS}).
\end{rem}

\section{Main results}
\label{form}

Now we state the main result of this paper.

\begin{theorem}[\bf General theorem]
\label{t2}
Suppose that  kernel $f$ and  points $U_1, \ldots, U_n$ satisfy all  Conditions  \textup{A} and \textup{B.}
Let $H_n$ be the $U$-$\max$ statistics constructed by kernel $f$, that is,
$ H_n = \max\limits_{1 \le i_1< \ldots <i_m \le n} f(U_{i_1}, \ldots, U_{i_m}).$

Then for every  $t>0$ the following relation holds true:
\begin{align}
\label{main}
\lim_{n \rightarrow \infty} \mathbb{P}\lbrace n^{\frac{2m}{m-1}} (M-H_n)\le t\rbrace=1 -e^{-\frac{t^{\frac{m-1}{2}}K}{m!}},
\end{align}
where
$K=\frac{\left(2\pi\right)^{\frac{m-1}{2}}}{\Gamma\left(\frac{m+1}{2}\right)}\sum\limits_{i=1}^k \left( \frac{1}{\sqrt{\det(-G_i)}} \int\limits_0^{2\pi} p(x) \prod\limits_{l=1}^{m-1} p(x+V_i^l) \, dx \right) $ and $M$ is from Condition \textup{A4.}
The rate of convergence is $O\left(n^{-\frac{1}{m-1}}\right)$ for  $m>1$ and   $O(n^{-1})$
for $m=1$.
\end{theorem}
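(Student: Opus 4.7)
The plan is to apply Theorem~\ref{SB} with the normalization $z_n(t) = M - t\, n^{-2m/(m-1)}$, so that $\mathbb{P}\{n^{2m/(m-1)}(M-H_n) \le t\} = \mathbb{P}\{H_n \le z_n(t)\}$. What remains is to verify both hypotheses of Theorem~\ref{SB} and read off the rate from Remark~\ref{rem2}.

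For the convergence $\lambda_{n,z_n(t)} \to \lambda_t$, using the parametrization from Section~\ref{tt1} (the angle $\theta$ of $U_1$ plus the central angles $\beta \in [0,2\pi)^{m-1}$), I would write
\begin{equation*}
p_{n,z} = \int_0^{2\pi} p(\theta)\!\!\int_{\{h(\beta) > M - s\}}\! \prod_{j=1}^{m-1} p(\theta + \beta_j)\, d\beta\, d\theta,\qquad s = M-z.
\end{equation*}
For $s$ small, Condition A4 decomposes the super-level set into disjoint $\delta$-neighborhoods of $V_1,\ldots,V_k$. On each such neighborhood the Taylor expansion $h(V_i+\eta)=M+\tfrac12\eta^{T}G_i\eta+O(|\eta|^3)$ (from A5) together with the non-degeneracy A6 shows that, up to a lower-order perturbation, the set is the ellipsoid $\mathcal{E}_i(s)=\{\eta:\tfrac12\eta^{T}(-G_i)\eta<s\}$, of $(m-1)$-volume $\frac{(2\pi s)^{(m-1)/2}}{\Gamma((m+1)/2)\sqrt{\det(-G_i)}}$. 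Continuity of $p$ (B2) allows pulling the factor $\prod_j p(\theta+V_i^j)$ out of the inner integral with vanishing relative error; summing over $i$, integrating in $\theta$, and invoking B3 for non-triviality yields $p_{n,z}=K\,s^{(m-1)/2}(1+o(1))$. Multiplying by $\binom{n}{m}\sim n^m/m!$ at $s=t n^{-2m/(m-1)}$ produces $\lambda_t=t^{(m-1)/2}K/m!$.

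For the negligibility assumption (in the form of~\eqref{zz1}), I would treat each $r\in\{1,\ldots,m-1\}$ separately. With the two tuples $(U_1,\ldots,U_m)$ and $(U_{m-r+1},\ldots,U_{2m-r})$, introduce coordinates $\theta_1$, the first tuple's central angles $\beta$, and the exclusive angles $\theta_{m+1},\ldots,\theta_{2m-r}$. A short linear computation shows that the central angles $\gamma$ of the second tuple split: $\gamma_1,\ldots,\gamma_{r-1}$ are functions of $\beta$ alone, while $\gamma_r,\ldots,\gamma_{m-1}$ correspond, through a Jacobian-one affine substitution, to $\theta_{m+1},\ldots,\theta_{2m-r}$. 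The first constraint confines $\beta$ to $\mathcal{E}_i(s)$ near some $V_i$ and so fixes $\gamma_1,\ldots,\gamma_{r-1}$ within $O(\sqrt s)$ of determined values. For the second constraint to permit a match with a maximum $V_j$ (up to the permutations allowed by A2), these fixed values must agree with $r-1$ coordinates of some $V_j$: a finite combinatorial condition. Where it holds, the conditional super-level set for $\gamma_r,\ldots,\gamma_{m-1}$ is a slice of the $V_j$-ellipsoid of $(m-r)$-volume $O(s^{(m-r)/2})$. Integrating the continuous density product gives $\mathbb{P}\{f(\text{tuple 1})>z,\,f(\text{tuple 2})>z\}=O(s^{(2m-1-r)/2})$, so $\tau_{n,z}(r)=O(s^{(m-r)/2})$. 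Substituting $s=t n^{-2m/(m-1)}$ collapses the exponent in $n^{2m-r}p_{n,z}\tau_{n,z}(r)$ to $(r-m)/(m-1)$, which is negative for every $r\le m-1$ and attains its worst value $-1/(m-1)$ at $r=m-1$; together with the $n^{-1}$ term from Remark~\ref{rem2} this gives both the vanishing required by~\eqref{zz1} and the announced rate.

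The technical heart of the argument is the second step: one must track the permutations and rotations of each maximum (since A2 only makes $h$ symmetric), verify that the cubic Taylor remainder does not spoil the leading volume of the transversal ellipsoid slice, and show that the matching condition genuinely behaves as a finite combinatorial check in the limit. The first step is essentially bookkeeping around a single non-degenerate quadratic form and runs parallel to the specific cases of \cite{n14,n20,n21,n22}; the novelty is that both the density $p$ (subject only to B1--B3) and the structure of the maximum set $\{V_1,\ldots,V_k\}$ (subject to A4--A6) are fully general.
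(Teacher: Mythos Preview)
Your proposal is correct and follows essentially the paper's route: apply Theorem~\ref{SB} with $z_n(t)=M-tn^{-2m/(m-1)}$, establish \eqref{fff1} from the second-order Taylor expansion of $h$ at each $V_i$ together with the volume of the resulting ellipsoid (this is the paper's Theorem~\ref{t3}), and establish \eqref{zz1} from the $O(\sqrt\varepsilon)$-localization of both $m$-tuples near maxima (the paper's Lemma~\ref{ll5}). For the second hypothesis the paper is a bit more elementary than your ellipsoid-slice argument: having proved the localization $\|\beta-V_i\|\le D\sqrt\varepsilon$ as a separate lemma, it simply bounds each of the $2m-1-r$ free angular coordinates by an interval of length $O(\sqrt\varepsilon)$ and the density by $\max p$, obtaining the same order $\varepsilon^{(2m-1-r)/2}$ via a union over all pairs $(V_i,V_j)$ without any combinatorial matching discussion.
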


\medskip

Theorem \ref{t2} immediately implies several simple but very useful consequences. As far as we know, these consequences are new.
First of all, Theorem \ref{t2} can be modified slightly for $ U$-$\min$ statistics.
\begin{sled}
Let us replace the maximum  $M$ with the minimum  $ \mu $ and  consider the points of minimum in  Conditions \textup{A4, A5, A6} and \textup{B3.} Denote by $H_n$  the $U$-$\min$ statistics constructed by kernel $f$, that is,
$ H_n = \min\limits_{1 \le i_1< \ldots <i_m \le n} f(U_{i_1}, \ldots, U_{i_m}).$

 Then for each  $t>0$ we have
$$\lim_{n \rightarrow \infty} \mathbb{P}\{ n^{\frac{2m}{m-1}} (H_n-\mu)\le t \}=1 -e^{-\frac{t^{\frac{m-1}{2}}K}{m!}},$$ where
$K=\frac{\left(2\pi\right)^{\frac{m-1}{2}}}{\Gamma\left(\frac{m+1}{2}\right)}\sum\limits_{i=1}^k\left( \frac{1}{\sqrt{\det(G_i)}} \int\limits_0^{2\pi} p(x) \prod\limits_{l=1}^{m-1} p(x+V_i^l) \, dx\right) .$
\end{sled}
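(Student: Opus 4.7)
The plan is to deduce this corollary directly from Theorem \ref{t2} by applying it to the kernel $-f$ in place of $f$, since $U$-min statistics of $f$ are just $U$-max statistics of $-f$ up to sign. More precisely, I would write
\[
H_n^{\min}(f) \;=\; \min_{J} f(U_{i_1},\ldots,U_{i_m}) \;=\; -\max_{J}(-f)(U_{i_1},\ldots,U_{i_m}) \;=\; -H_n^{\max}(-f),
\]
so that the event $\{n^{2m/(m-1)}(H_n^{\min}-\mu)\le t\}$ coincides with $\{n^{2m/(m-1)}(M'-H_n^{\max}(-f))\le t\}$, where $M'=-\mu$ is the maximal value of $-f$.

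Next I would verify that $-f$, viewed through the associated function $-h$, satisfies all of Conditions A and B. Rotation invariance (A1) and symmetry (A2) are preserved trivially. Continuity (A3) carries over because negation is continuous; the requirement that $h$ extend to a continuous function with values in $\mathbb{R}\cup\{-\infty\}$ translates to requiring that $-h$ extend with values in $\mathbb{R}\cup\{+\infty\}$, which is harmless for running the Poisson approximation argument because that argument only ever uses the behaviour of $h$ near its extrema; the existence of the finite minimum $\mu$ already guarantees that $-h$ attains a finite maximum. The maximum of $-h$ is achieved at exactly the same finite collection of interior points $V_1,\ldots,V_k$ as the minimum of $h$, so A4 holds; A5 is unchanged since $-h\in C^3$ iff $h\in C^3$; and the Hessian of $-h$ at $V_i$ is $-G_i$, so $\det(-G_i)\ne 0$ yields A6. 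Condition B3 depends only on $p$ and the locations $V_i$, not on $f$ itself, and so carries over verbatim.

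Applying Theorem \ref{t2} to $-f$ then gives
\[
\lim_{n\to\infty}\mathbb{P}\bigl\{n^{\tfrac{2m}{m-1}}(M'-H_n^{\max}(-f))\le t\bigr\} \;=\; 1-\exp\!\left(-\tfrac{t^{(m-1)/2}K'}{m!}\right),
\]
where the Hessians appearing in the constant $K'$ are the Hessians of $-h$ at $V_i$, namely $-G_i$. Consequently the determinant inside the square root becomes $\det(-(-G_i))=\det(G_i)$, while the integral factor is identical to the one in Theorem \ref{t2}, producing precisely the formula for $K$ stated in the corollary. Translating back through the identity in the first paragraph finishes the proof.

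The only genuine subtlety — and the step I would treat as the main obstacle — is checking that the framework of Conditions A really is symmetric under $f\mapsto -f$ given the slight asymmetry in A3 (allowing the value $-\infty$ but not $+\infty$). I would address this by either reinterpreting A3 for $-h$ with $+\infty$ in place of $-\infty$, noting that Theorem \ref{t2}'s proof only uses local smoothness near interior finite extrema, or equivalently by reformulating the conditions for the min case from scratch (as the corollary statement already does for A4--A6 and B3) and observing that the proof of Theorem \ref{t2} goes through mutatis mutandis. Either way, no new analytic work is required beyond what was done for the theorem.
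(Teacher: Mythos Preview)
Your proposal is correct and matches the paper's approach: the paper does not give a separate proof of this corollary, simply stating that it follows immediately from Theorem~\ref{t2}, and your reduction via the substitution $f\mapsto -f$ is precisely the intended argument. Your discussion of the $\pm\infty$ asymmetry in Condition~A3 is appropriate and handles the only point requiring comment.
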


\medskip

\begin{sled}[\bf Uniform distribution]
Suppose that $ U_1, \ldots, U_n $ are independently and uniformly distributed points on the unit circle. Let $ H_n $ be $U$-$\max$ statistics with a kernel $ f $ satisfying  Conditions \textup{A} and \textup{B.}
Then for any $ t>0 $ we have $$\lim_{n \rightarrow \infty} \mathbb{P}\lbrace n^{\frac{2m}{m-1}} (M-H_n)\le t\rbrace=1 -e^{-\frac{t^{\frac{m-1}{2}}K}{m!}},$$ where $K=\frac{1}{\left(2\pi\right)^{\frac{m-1}{2}}\Gamma\left(\frac{m+1}{2}\right)} \sum_{i=1}^k \frac{1}{\sqrt{ \det(-G_i)}}.$
The rate of convergence is the same as in \eqref{main}.
\end{sled}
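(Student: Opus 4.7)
The plan is to obtain this corollary as an immediate specialization of the general Theorem \ref{t2} to the uniform density $p(x) \equiv \frac{1}{2\pi}$ on the unit circle. The work consists of two small pieces: verifying that Conditions B hold for this density, and evaluating the constant $K$ explicitly under uniform sampling.

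First I would check Conditions B. Conditions B1 and B2 are immediate, since the uniform density is continuous, non-negative, $2\pi$-periodic, and integrates to one. For B3 one must exhibit a maximum point $V_*$ of $h$ for which the associated integral is nonzero; since $p > 0$ pointwise, the integrand in B3 is the positive constant $(2\pi)^{-m}$, so the integral equals $(2\pi)^{-(m-1)} > 0$ for every maximal point $V_i$, and B3 holds automatically. Conditions A on the kernel are assumed in the hypothesis, so Theorem \ref{t2} applies directly.

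The only remaining task is to compute the constant $K$ from Theorem \ref{t2}. A direct calculation gives
\[
\int_0^{2\pi} p(x) \prod_{l=1}^{m-1} p(x+V_i^l)\, dx = \frac{2\pi}{(2\pi)^{m}} = \frac{1}{(2\pi)^{m-1}},
\]
independently of the particular maximum point $V_i$. Substituting this value into the formula for $K$ in Theorem \ref{t2}, the factor $(2\pi)^{(m-1)/2}$ in the numerator combines with the factor $(2\pi)^{-(m-1)}$ coming out of the integral to produce $(2\pi)^{-(m-1)/2}$, yielding exactly the expression stated in the corollary. The rate of convergence is inherited verbatim from Theorem \ref{t2}.

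There is no genuine obstacle in this proof; it is a short bookkeeping exercise with powers of $2\pi$. The only point requiring any care is to make sure Condition B3 is explicitly verified before invoking the general theorem, since its conclusion formally depends on it, even though in the uniform setting B3 is essentially automatic.
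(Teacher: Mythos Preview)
Your proposal is correct and matches the paper's approach: the paper presents this corollary as an immediate consequence of Theorem~\ref{t2} without giving a separate proof, and your verification of Conditions~B together with the substitution $p\equiv \frac{1}{2\pi}$ into the formula for $K$ is exactly the intended specialization. The arithmetic with the powers of $2\pi$ is right, and the observation that B3 holds automatically because the uniform density is strictly positive is the only point worth making explicit.
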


 Consider a function $ f $ satisfying Conditions A and B.
By Condition A4, the coordinates of all $ V_i $ do not coincide. Therefore, all the  points  of maximum  of the corresponding function $ h $ are divided into permutation orbits of length $ (m-1)!, $ and the determinant of the Hessian matrix of  function $ h $ at all these points is the same.
Denote by $ W_1, \ldots, W_r $  points of  maximum ordered by ascending of the angles.
Then we have another useful corollary  simplifying the formulations.
\begin{sled}
\label{z8}
Let $ H_n $ be $U$-$\max$ statistics with a kernel $ f $ satisfying  Conditions \textup{A} and \textup{B.}
Then
for any $ t>0 $ the following limit relation holds:
$$\lim_{n \rightarrow \infty} \mathbb{P}\lbrace n^{\frac{2m}{m-1}} (M-H_n)\le t \rbrace=1 -e^{-\frac{t^{\frac{m-1}{2}}K}{m}},$$ where
$K=\frac{\left(2\pi\right)^{\frac{m-1}{2}}}{\Gamma\left(\frac{m+1}{2}\right)}\sum\limits_{i=1}^r \frac{1}{\sqrt{\det(-G_i)}}\int\limits_0^{2\pi} \left[p(x) \prod\limits_{l=1}^{m-1} p(x + W_i^l) \right]\, dx $
 and $G_i$ is the Hessian matrix at the point $W_i$.
\end{sled}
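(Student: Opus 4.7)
The plan is to obtain Corollary \ref{z8} as a direct bookkeeping consequence of Theorem \ref{t2}. The constant $K$ in Theorem \ref{t2} is a sum over all $k$ maximum points $V_1,\dots,V_k$ of $h$, divided by $m!$, while the $K$ in the corollary is a sum over $r$ orbit representatives $W_1,\dots,W_r$, divided by $m$. Since $(m-1)!/m!=1/m$, the corollary reduces to showing that the maximum points of $h$ partition into $S_{m-1}$-orbits of size exactly $(m-1)!$, and that the two expressions appearing under the sum (the Hessian determinant and the integral against $p$) are constant on each such orbit.

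First I would verify the orbit structure. By Condition A2 the function $h$ is symmetric in its $m-1$ arguments, so if $V=(V^1,\dots,V^{m-1})$ is a maximum point then so is every permutation $\sigma(V)=(V^{\sigma(1)},\dots,V^{\sigma(m-1)})$ for $\sigma\in S_{m-1}$. Condition A4 places all coordinates $V_i^j$ in the open interval $(0,2\pi)$, and the discussion preceding the corollary records that within each $V_i$ the coordinates are pairwise distinct. Consequently the stabilizer of $V_i$ in $S_{m-1}$ is trivial and its orbit has exactly $(m-1)!$ elements, giving $k=r\cdot(m-1)!$.

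Next I would check orbit invariance of the summands. Permuting the argument of $h$ by $\sigma$ permutes the rows and columns of the Hessian matrix by the same $\sigma$, which preserves the determinant; hence $\det(G_i)$ depends only on the orbit of $V_i$. Similarly the product $\prod_{l=1}^{m-1}p(x+V^l)$ is manifestly invariant under reordering the factors, so the integral $\int_0^{2\pi} p(x)\prod_{l=1}^{m-1}p(x+V_i^l)\,dx$ is also an orbit invariant. Therefore each of the $r$ orbits contributes $(m-1)!$ equal terms to the sum $\sum_{i=1}^k$ appearing in Theorem \ref{t2}. Picking one representative $W_j$ per orbit — for concreteness the one whose coordinates are listed in ascending order — collapses the sum of $k$ terms into $(m-1)!$ times the sum of $r$ terms, and the factor $(m-1)!/m!=1/m$ produces exactly the constant stated in Corollary \ref{z8}.

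There is no genuine analytic obstacle: all the hard work has already been done in Theorem \ref{t2}. The step requiring the most care is simply confirming that the choice of representative within each orbit is irrelevant, which follows from the two symmetry observations above. The rate of convergence carries over from Theorem \ref{t2} without change.
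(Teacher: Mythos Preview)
Your proposal is correct and matches the paper's own treatment: the paper does not give a separate proof of the corollary, merely noting in the paragraph preceding it that the maximum points of $h$ fall into $S_{m-1}$-orbits of size $(m-1)!$ with constant Hessian determinant on each orbit, and then stating the corollary as an immediate reformulation of Theorem~\ref{t2}. You have spelled out precisely the two invariance checks (Hessian determinant under simultaneous row/column permutation, and the integral under reordering of the factors) that justify collapsing the sum, together with the arithmetic $(m-1)!/m!=1/m$; nothing more is needed.
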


\begin{rem}
Theorem \ref{t2} may be applied to a rather wide class of kernels from stochastic geometry but of course not to all of them.  In \cite{n22} the definition of  generalized perimeter was introduced.  It is the sum of the $ y$-th degrees of the side lengths of a polygon constructed on given $ m $ points on a circle. It is shown that for $ y>1 $ and $m > 1+\pi/\left(\arccos{\frac{1}{\sqrt{y}}}\right)$ the generalized perimeter attains its maximum on the configuration of points in which some of them  coincide. This shows that our result cannot be applied to this rather simple variant of the problem. In this case the limit relation is still an open question.
\end{rem}

In the following sections we  give limit theorems for $ U $-$\max$ statistics generated by  kernels of a special form that attain their maximum only at the vertices of a regular polygon. This  special case  covers a large number of standard geometric characteristics.

\section{Limit behavior of $U$-max statistics for several  functions depending on the side lengths of the polygon}
\label{edg1}
Consider a function  $f:	\left(S^1\right)^m \rightarrow \mathbb{R}$ which is invariant with respect to rotation. It may be defined as a function $h: [0, 2\pi)^{m-1} \rightarrow \mathbb{R},$  where the set of angles  $(\beta_1, \ldots, \beta_{m-1})$ is determined by the set of points $(U_1, \ldots, U_m) \in \left(S^1\right)^m$ according to  (\ref{bb1}). The connection between these functions is given by
\begin{align}
\label{f27}
f(U_1, \ldots, U_m) = h(\beta_1, \ldots, \beta_{m-1}).
\end{align}

Consider some function $g:[0, 2\pi] \rightarrow \mathbb{R}$ which is continuous and three times continuously differentiable in some neighborhood of the point $ \frac{2 \pi}{m}. $ We also require  $g''(\frac{2\pi}{m}) \ne 0.$ Define the functions $ f $ and $ h $ as follows:
 \begin{align}
\label{f22}
&f(U_1, \ldots, U_m)=h(\beta_1, \ldots, \beta_{m-1}) = \sum_{i=1}^{m}g(\beta_i-\beta_{i-1}),\\
&\nonumber \text{ where } 0=\beta_0 \le \beta_1 \le \ldots \le \beta_{m-1}\le  \beta_m=2\pi,
\end{align}
and further define the function $ h $ on $ [0, 2 \pi]^{m-1} $ so that it is symmetric, and corresponding to the function $ f. $
Defined in such a way  function $ f $  satisfies conditions A1 and A3. Condition A2 also holds since  function $ f $ depends only on the angles between neighboring   vectors $ OU_1, \ldots, OU_m. $

Under additional restrictions we get the following statement:

\begin{theorem}
\label{tt7}
Suppose that the points $ U_1, \ldots, U_n $ are independently distributed on $ S^1 $ with a common continuous density $ p(x) $ such that   $\int_{0}^{2\pi} \prod_{l=0}^{m-1} p(x+\frac{2\pi l}{m}) \, dx >0.$
Consider the $ U$-$\max$ statistics $ H_n $ with kernel  $ f$  of the form  \eqref{f22}. Suppose that this kernel attains its maximum only at the vertices of a regular $ m$-gon.

Then for any $ t>0 $ the following  limit relation holds:
\begin{align}
\label{f23}
\lim_{n \rightarrow \infty} \mathbb{P}\Big \lbrace n^{\frac{2m}{m-1}} \left(mg\left(\frac{2\pi}{m}\right)-H_n\right)\le t\Big \rbrace=1 -e^{-\frac{t^{\frac{m-1}{2}}K}{m}},
\end{align}
 where
$$K=\frac{\left(2\pi\right)^{\frac{m-1}{2}}\left[\int_{0}^{2\pi} \prod_{l=0}^{m-1} p(x+\frac{2\pi l}{m})\, dx \right]}{ \left(-g''\left(\frac{2\pi}{m}\right)\right)^{\frac{m-1}{2}}\Gamma\left(\frac{m+1}{2}\right)\sqrt{m}}.$$
\end{theorem}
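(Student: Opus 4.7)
The plan is to recognize this theorem as a direct instance of Corollary \ref{z8} applied to the kernel defined in \eqref{f22}: essentially all the work reduces to verifying Conditions A and B and computing a single Hessian determinant. Conditions A1, A2, A3 are built into the construction of $f$ via the symmetrized $h$ and the continuity of $g$. Condition A4 is given by hypothesis, since the kernel attains its maximum only at the regular $m$-gon configuration, which, after ordering the central angles, corresponds to the single interior point $W_1 = (2\pi/m, 4\pi/m,\ldots,2\pi(m-1)/m)$; the symmetric extension then produces $k=(m-1)!$ maximum points in one permutation orbit, none of them on the boundary. Condition A5 is immediate from the $C^3$-smoothness of $g$ near $2\pi/m$, and Conditions B1, B2, B3 are either assumed outright or follow from the positivity hypothesis on $\int_0^{2\pi}\prod_{l=0}^{m-1} p(x+2\pi l/m)\,dx$ once we notice that $W_1^l = 2\pi l/m$.

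The substantive calculation is the Hessian. On the ordered simplex, differentiating $h(\beta)=\sum_{i=1}^m g(\beta_i-\beta_{i-1})$ (with $\beta_0=0$, $\beta_m=2\pi$) produces only diagonal and nearest-neighbour off-diagonal contributions,
\[
\frac{\partial^2 h}{\partial \beta_j^2} = g''(\beta_j-\beta_{j-1}) + g''(\beta_{j+1}-\beta_j),\qquad \frac{\partial^2 h}{\partial \beta_j\,\partial\beta_{j+1}} = -g''(\beta_{j+1}-\beta_j).
\]
At $W_1$ every increment equals $2\pi/m$, so $G_1 = g''(2\pi/m)\,T$, where $T$ is the $(m-1)\times(m-1)$ tridiagonal matrix having $2$'s on the diagonal and $-1$'s on the off-diagonals. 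Since $W_1$ is a strict maximum and $T$ is positive definite, one necessarily has $g''(2\pi/m)<0$, which is consistent with the nondegeneracy assumption. A standard determinant recurrence (or recognising $T$ as the Cartan matrix of type $A_{m-1}$) gives $\det T = m$, hence $\sqrt{\det(-G_1)} = (-g''(2\pi/m))^{(m-1)/2}\sqrt{m}$. This verifies A6 and fixes the Hessian factor entering Corollary \ref{z8}.

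Finally I would assemble the constants. The maximum value is $M = m\,g(2\pi/m)$, there is a single ordered representative so $r=1$, and writing $p(x)=p(x+2\pi\cdot 0/m)$ absorbs $p(x)$ into the product in Corollary \ref{z8}, turning its integral into $\int_0^{2\pi}\prod_{l=0}^{m-1} p(x+2\pi l/m)\,dx$. Plugging everything into the formula for $K$ yields exactly the constant stated in \eqref{f23}, and the limit relation follows. I anticipate no real obstacle here: the only nontrivial computation is the determinant identity $\det T = m$, after which the result is essentially bookkeeping inherited from the general theorem.
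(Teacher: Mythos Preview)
Your proposal is correct and follows essentially the same route as the paper: verify Conditions A and B, reduce via Corollary~\ref{z8} to the single ordered maximum $W_1=(2\pi/m,\ldots,2\pi(m-1)/m)$, compute the Hessian as $g''(2\pi/m)$ times the tridiagonal $(2,-1)$ matrix, evaluate its determinant as $m$ (the paper uses the same recurrence you allude to), and substitute into the formula for $K$. The only cosmetic difference is that the paper takes $g''(2\pi/m)\ne 0$ as a standing hypothesis from the preamble rather than deducing the sign from the maximum, but this does not affect the argument.
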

\begin{proof}

Let us first prove that  function $ f $ and density $p $  satisfy Conditions A and B from Section 3. The statement of Conditions A1, A2, A3 was established above. The fulfillment of Condition A4 follows from the fact that a regular polygon is the only maximal point  of  function $ f. $  Condition A5 follows from  formula $ (\ref{f22}) $ and the differentiability assumption. Conditions B1 and B2  are obviously satisfied (they are assumed in the statement of the theorem). It remains to check the properties  A6 and B3,  then  we can use Theorem \ref{t2}.

We use the arguments from Corollary \ref{z8}. It allows us to restrict ourselves only on the case
 $0\le \beta_1\le \ldots\le \beta_{m-1}\le 2\pi.$ By  (\ref{bet1}), a regular $ m$-gon corresponds to some permutation of the angles from the set $V^*=(V^1, \ldots, V^{m-1}),$ where $ V^i = \frac{2 \pi i }{m} $. Thus, the condition that the maximum of  function $ f $ is attained only on the regular $ m $-gon means that the point $ V^* $ is the only point of  maximum of  function $ h $  among all points with  ordered  angles. Together with the condition $\int_{0}^{2\pi} \prod_{l=0}^{m-1} p(x+\frac{2\pi l}{m})\, dx >0$ it implies the validity of Condition B3.
 Next, we obtain an explicit formula for the determinant of the Hessian matrix of  function $ h $ at the point $ V^* $ and make sure that it is not equal to zero. This fact implies the fulfillment of Condition A6, and then an application of Corollary \ref{z8} finishes the proof of Theorem \ref{tt7}.

By simple calculations, we get
$$
\frac{\partial^2 h}{\partial x_i \partial x_j} \left(V^*\right) = \begin{cases}
0, \text { if } |i-j|>1,\\
2g''\left(\frac{2\pi}{m}\right), \text{ if } i=j,\\
-g''\left(\frac{2\pi}{m}\right), \text{ if } |i-j|=1.
\end{cases}
$$

Therefore, the Hessian matrix at the point $ V^* $ is
\begin{align*}
G(V^*)=g''\left(\frac{2\pi}{m}\right) \begin{pmatrix}
2 & -1 & 0 & 0 & \ldots & 0\\
-1 & 2 & -1 & 0 & \ldots & 0\\
0 & -1 & 2 & -1 & \ldots & 0\\
\vdots & \vdots & \vdots & \vdots & \ddots & \vdots \\
0 & 0 & 0 & 0 & \ldots & 2\\
\end{pmatrix}                             = g''\left(\frac{2\pi}{m}\right)B_{m -1}.
\end{align*}
 The determinant of the tridiagonal matrix $ B_ {m-1} $ can  be easily calculated
  using a recurrence relation. Denote by $ d(n) $ the determinant of such $ n \times n $  matrix.  Then it is easy to see that
$$\begin{cases}
d(1)=2,\\
d(2)=3,\\
d(n)=2d(n-1)-d(n-2).
\end{cases}
$$
Therefore, $ d(n) = n+  1. $
Hence,
$$
\det(-G(V^*))=m\cdot\left(-g''\left(\frac{2\pi}{m}\right)\right)^{m-1} \ne 0,
$$
 and the conditions of  Theorem \ref{t2} are satisfied. Substituting the determinant of  Hessian matrix into the formula from Corollary \ref{z8} we obtain the required limit relation.
\end{proof}

\begin{theorem}
\label{tt6}

For strictly concave functions  $g:[0,2\pi] \rightarrow \mathbb{R}$  the maximum of the function $ f $ defined in  \eqref{f22} is attained only at the vertices of the regular $ m $-gon.
\end{theorem}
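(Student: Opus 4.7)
The plan is to reduce the maximization problem to a one-variable application of Jensen's inequality. Writing $\alpha_i = \beta_i - \beta_{i-1}$ for $i = 1,\ldots,m$, with the convention $\beta_0 = 0$ and $\beta_m = 2\pi$ as in \eqref{f22}, the arc lengths $\alpha_i$ are non-negative and satisfy the single constraint
\begin{equation*}
\sum_{i=1}^{m} \alpha_i = 2\pi.
\end{equation*}
Because $f$ is invariant under rotations (Condition A1) and symmetric in its arguments (Condition A2), it suffices to maximize $h(\beta) = \sum_{i=1}^m g(\alpha_i)$ over the simplex $\{\alpha : \alpha_i \ge 0,\ \sum \alpha_i = 2\pi\}$.

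Since $g$ is strictly concave on $[0, 2\pi]$, I would apply Jensen's inequality in the form
\begin{equation*}
\frac{1}{m}\sum_{i=1}^{m} g(\alpha_i) \le g\!\left(\frac{1}{m}\sum_{i=1}^{m} \alpha_i\right) = g\!\left(\frac{2\pi}{m}\right),
\end{equation*}
with equality if and only if $\alpha_1 = \cdots = \alpha_m$. This gives the upper bound $h(\beta) \le m\,g(2\pi/m)$, attained precisely when all arc lengths equal $2\pi/m$, i.e., when the configuration $(U_1,\ldots,U_m)$ is the regular $m$-gon.

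The configuration with equal arc lengths corresponds, modulo rotation and permutation, to the single point $V^* = (2\pi/m, 4\pi/m, \ldots, 2\pi(m-1)/m)$ in the ordered cell $0 \le \beta_1 \le \cdots \le \beta_{m-1} \le 2\pi$. Hence the maximum of $f$ is attained only at the vertices of the regular $m$-gon, as required. There is no real obstacle here: the only subtlety is checking that the parameterization by arc lengths is a bijection (up to symmetry) between ordered angle tuples in the simplex and configurations of $m$ points on $S^1$ taken up to rotation, which is immediate from definition \eqref{bb1} and the extension of $h$ to $[0,2\pi]^{m-1}$ described in Condition A3.
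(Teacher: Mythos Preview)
Your proof is correct and follows essentially the same route as the paper: both reduce the maximization to the arc lengths $\alpha_i=\beta_i-\beta_{i-1}$ summing to $2\pi$ and apply Jensen's inequality for the strictly concave $g$, using strictness to conclude uniqueness of the regular $m$-gon configuration. The only difference is cosmetic---you spell out the simplex parameterization a bit more explicitly---so there is nothing substantive to add.
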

\begin{proof}
We assume that $0 \le \beta_1 \le \ldots \le \beta_{m-1} \le 2\pi.$
Let us prove that the point $V^*=(V^1, \ldots, V^{m-1}),$ where $ V^i = \frac{2 \pi i}{m} $ is the only  point of maximum of the function $f$ among all points with  ordered  angles. Due to  Jensen's inequality we have
\begin{align}
\label{f19}
&f(U_1, \ldots, U_m) = \sum_{i=1}^{m}g\left(\beta_i-\beta_{i-1}\right) \ge m g\left(\frac{\beta_m-\beta_0}{m}\right)\\
&\nonumber =mg\left(\frac{2\pi}{m}\right) =\sum_{i=1}^{m}g\left(V^i-V^{i-1}\right).
\end{align}
 Function $ g $ is strictly convex; therefore, if not all arguments of the function $ g $ are equal to each other,  inequality in $ (\ref{f19}) $ is strict.

\end{proof}
By combining Theorems \ref{tt7} and \ref{tt6} we obtain the  following corollary.
\begin{sled}
\label{sl1}

Suppose that  function  $g:[0, 2\pi] \rightarrow \mathbb{R}$ is continuous, strictly concave, three times continuously differentiable in a neighborhood of the point $ \frac{2 \pi}{m} $  and $ g'' (\frac{2 \pi}{m}) \ne 0. $ Let  function $ f $be  defined in \eqref{f22}.  Then $ U $-$\max$ statistics with  kernel $ f $ satisfies  relation \eqref{f23} of Theorem \textup{\ref{tt7}}.
\end{sled}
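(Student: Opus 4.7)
The proof will be a direct combination of the two preceding results, with no substantive new calculation. The plan is to verify each hypothesis of Theorem \ref{tt7} in turn, using the assumptions of the corollary together with Theorem \ref{tt6} to supply the single piece that is not already built into the statement.

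First, I would observe that the kernel $f$ in \eqref{f22} is literally the kernel to which Theorem \ref{tt7} applies, so the two statements are immediately compatible. The smoothness requirements ($g$ continuous on $[0,2\pi]$, three times continuously differentiable in a neighborhood of $2\pi/m$, and $g''(2\pi/m)\neq 0$) are assumed verbatim in the corollary, so there is nothing to check there. The hypotheses on the density $p(x)$ (continuity and positivity of the integral $\int_0^{2\pi}\prod_{l=0}^{m-1}p(x+2\pi l/m)\,dx$) are inherited from the standing assumptions on the distribution of $U_1,\ldots,U_n$ that are in force throughout this section.

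The only hypothesis of Theorem \ref{tt7} not present verbatim in the statement of the corollary is the requirement that the kernel $f$ attain its maximum only at the vertices of a regular $m$-gon. This is precisely the conclusion of Theorem \ref{tt6}, whose hypothesis (strict concavity of $g$) is part of the corollary. With all hypotheses of Theorem \ref{tt7} thereby satisfied, its conclusion delivers the limit relation \eqref{f23} directly, which is what we wanted.

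The ``main obstacle,'' such as it is, is essentially bookkeeping: making sure that the uniqueness of the maximizer guaranteed by Theorem \ref{tt6} is matched against the corresponding hypothesis of Theorem \ref{tt7} without any gap, so that no extra smoothness or geometric condition has to be verified on the side. Since the two statements were tailor-made to be chained together, this verification is immediate and the proof reduces to a one-line citation of the two preceding theorems.
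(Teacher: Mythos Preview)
Your proposal is correct and matches the paper's approach exactly: the paper simply states that the corollary follows by combining Theorems \ref{tt7} and \ref{tt6}, which is precisely what you do.
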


\begin{sled}
\label{sl2}
In Theorem \textup{\ref{tt7}} we may consider functions $ f $ for which the regular polygon is the only  point of minimum.  Then for the $ U$-$\min$  statistics $ H_n $ generated by  kernel $ f $ the following limit relation holds for any   $t>0$ \textup{(}similarly to  \eqref{f23}\textup{)}:

$$\lim_{n \rightarrow \infty} \mathbb{P}\Big\lbrace n^{\frac{2m}{m-1}} \left(H_n-mg\left(\frac{2\pi}{m}\right)\right)\le t\Big\rbrace=1 -e^{-\frac{t^{\frac{m-1}{2}}K}{m}},$$ where
$$K=\frac{\left(2\pi\right)^{\frac{m-1}{2}}\left[\int_{0}^{2\pi} \prod_{l=0}^{m-1} p(x+\frac{2\pi l}{m})\, dx \right]}{ \left(g''\left(\frac{2\pi}{m}\right)\right)^{\frac{m-1}{2}}\Gamma\left(\frac{m+1}{2}\right)\sqrt{m}}.$$

In particular, this is the case if we consider the strictly convex functions g in  Corollary \textup{\ref{sl1}} instead of strictly concave ones.

\end{sled}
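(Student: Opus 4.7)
My plan is to mirror the proof of Theorem \ref{tt7} almost verbatim, replacing ``maximum'' by ``minimum'' throughout, and then to verify the concrete hypothesis about strictly convex $g$ by an application of Jensen's inequality in the reverse direction.

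First, I would invoke the $U$-min variant of Theorem \ref{t2} (the first corollary in Section \ref{form}), which is obtained by replacing $M$ with $\mu$, replacing points of maximum with points of minimum in Conditions A4--A6 and B3, and writing $\det(G_i)$ in place of $\det(-G_i)$. The task is then to check all of these conditions for the kernel $f$ defined in \eqref{f22} under the new assumption that the regular $m$-gon is the unique point of minimum. Conditions A1--A3, B1 and B2 carry over without any change, since they are independent of whether one is considering a maximum or a minimum. A4 (for minima) is exactly the hypothesis, A5 follows from the smoothness of $g$ near $2\pi/m$, and B3 follows from the positivity assumption $\int_0^{2\pi}\prod_{l=0}^{m-1} p(x+\tfrac{2\pi l}{m})\,dx>0$ together with the fact that $V^*=(2\pi/m,\dots,2(m-1)\pi/m)$ is the unique minimum among ordered configurations.

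Second, I would recycle the Hessian computation carried out in the proof of Theorem \ref{tt7} verbatim, since nothing in that computation used the sign of $g''(2\pi/m)$. We still have $G(V^*)=g''(2\pi/m)\,B_{m-1}$ with $\det B_{m-1}=m$, hence
\begin{equation*}
\det(G(V^*))=m\cdot\bigl(g''(\tfrac{2\pi}{m})\bigr)^{m-1}\neq 0,
\end{equation*}
because we assume $g''(2\pi/m)\neq 0$. This verifies A6. Plugging $\det(G(V^*))$ into the formula of the $U$-min corollary (which uses $\det(G_i)$, not $\det(-G_i)$) and using Corollary \ref{z8} gives the claimed Weibull limit with constant
\begin{equation*}
K=\frac{(2\pi)^{(m-1)/2}\bigl[\int_0^{2\pi}\prod_{l=0}^{m-1}p(x+\tfrac{2\pi l}{m})\,dx\bigr]}{\bigl(g''(\tfrac{2\pi}{m})\bigr)^{(m-1)/2}\Gamma\bigl(\tfrac{m+1}{2}\bigr)\sqrt{m}},
\end{equation*}
which is exactly the expression stated.

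Third, for the final clause (``in particular'' for strictly convex $g$), I would argue as in Theorem \ref{tt6} but with the inequality reversed. Assuming $0\le\beta_1\le\dots\le\beta_{m-1}\le 2\pi$, strict convexity of $g$ together with Jensen's inequality gives
\begin{equation*}
\sum_{i=1}^{m}g(\beta_i-\beta_{i-1})\ge m\,g\!\left(\frac{\beta_m-\beta_0}{m}\right)=m\,g\!\left(\frac{2\pi}{m}\right),
\end{equation*}
with equality if and only if all $\beta_i-\beta_{i-1}$ coincide, i.e.\ at the regular polygon $V^*$. Hence strict convexity implies the uniqueness-of-minimum hypothesis, and the first part of the corollary applies.

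There is really no serious obstacle in this argument: the Hessian formula, Jensen's inequality, and the $U$-min variant of Theorem \ref{t2} are already in place. The only point deserving care is bookkeeping of signs, namely that at a minimum $g''(2\pi/m)>0$ so that the factor $\det(G(V^*))$ is automatically positive and no extra minus sign is needed under the square root. The statement of the corollary implicitly assumes this (or, if $g''(2\pi/m)<0$, the point $V^*$ could not be a minimum and the hypothesis would be vacuous), so the proof is complete by direct substitution.
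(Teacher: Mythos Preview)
Your proposal is correct and follows exactly the route the paper intends: the paper states this corollary without proof, relying on the reader to mirror the arguments of Theorem~\ref{tt7} and Theorem~\ref{tt6} with maximum replaced by minimum and concavity by convexity, invoking the $U$-$\min$ variant of Theorem~\ref{t2} (Corollary~4.1) in place of the $U$-$\max$ version. Your Hessian computation, sign bookkeeping, and reversed Jensen argument are precisely what is needed; the only cosmetic point is that Corollary~\ref{z8} is stated for $U$-$\max$, so strictly speaking you are using its obvious $U$-$\min$ analogue.
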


\begin{sled}
\label{rr2}

If $p$ is the  uniform density, then constant K from Theorem \textup{\ref{tt7}} satisfies
\begin{align}
\label{f21}
K=\frac{1}{ \left(-2\pi g''\left(\frac{2\pi}{m}\right)\right)^{\frac{m-1}{2}}\Gamma\left(\frac{m+1}{2}\right)\sqrt{m}}.
\end{align}
\end{sled}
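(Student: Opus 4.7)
The proof is essentially a substitution: I would take the formula for $K$ given in Theorem \ref{tt7} and specialize it to the uniform density. The plan is to reduce everything to a simple algebraic simplification of the integral factor, and then to combine powers of $2\pi$.

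First I would note that, since $p$ is the uniform density on $S^1$, one has $p(x) = \frac{1}{2\pi}$ for every $x \in [0, 2\pi)$. Consequently
\[
\prod_{l=0}^{m-1} p\!\left(x + \frac{2\pi l}{m}\right) = \left(\frac{1}{2\pi}\right)^m,
\]
so the integral appearing in Theorem \ref{tt7} evaluates explicitly as
\[
\int_0^{2\pi} \prod_{l=0}^{m-1} p\!\left(x + \frac{2\pi l}{m}\right)\, dx = \frac{2\pi}{(2\pi)^m} = \frac{1}{(2\pi)^{m-1}}.
\]
In particular, this integral is strictly positive, so the hypothesis of Theorem \ref{tt7} on the density $p$ is satisfied.

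Next, I would substitute this evaluation into the formula for $K$ supplied by Theorem \ref{tt7}. The numerator becomes $(2\pi)^{(m-1)/2} \cdot (2\pi)^{-(m-1)} = (2\pi)^{-(m-1)/2}$, which can be pushed into the denominator and combined with the factor $(-g''(2\pi/m))^{(m-1)/2}$ to yield $(-2\pi g''(2\pi/m))^{(m-1)/2}$. This gives exactly formula \eqref{f21} of the corollary. The $\Gamma((m+1)/2)$ and $\sqrt{m}$ factors pass through unchanged.

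There is no real obstacle here; the statement is a direct arithmetic consequence of Theorem \ref{tt7}, and the only ingredients are the value of the uniform density and the combining of powers of $2\pi$. The corollary is included simply to highlight the clean closed form in the most important special case.
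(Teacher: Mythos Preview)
Your proposal is correct and is precisely the direct substitution the paper intends; the corollary is stated in the paper without proof because it follows immediately from Theorem~\ref{tt7} by plugging in $p\equiv 1/(2\pi)$ and simplifying the powers of $2\pi$, exactly as you do.
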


\begin{ex}{\it Maximal perimeter of  inscribed polygon}
\label{exx1}

Let us consider the maximal perimeter of an inscribed convex $ m $-gon with random vertices on a circle.  These are $ U$-$\max$ statistics with  kernel $ f $ of the form (\ref{f22}), where $ g(x) = 2 \sin{\left(\frac{x}{2}\right)}. $
This function is strictly concave, therefore the results follow from Theorem \ref{tt7}. The limit behavior of $ U $-$\max$ statistics with such a kernel in the case of uniform distribution of points may be found in \cite{n14}, the result coincides with
Corollary  \ref{rr2} with this function $g,$ so that the result \cite{n14} is a simple special case of our results.
\end{ex}

\begin{ex}{\it Maximal area of  inscribed polygon.}
\label{exx2}

Another $ U $-$\max$ statistics considered in \cite{n14} was the area of   inscribed convex $ m $-gon. It is generated by kernel $ f $ of the form (\ref{f22}), where $ g(x) = \frac12 \sin{x}. $
The maximum of  function $ f $ is attained only on the regular $ m$-gon, see, for example, \cite[problem 57a]{n17}.
Therefore, the limit theorems for these $ U $-$\max$ statistics  follow directly from Theorem \ref{tt7} and Corollary \ref{rr2}. The results of \cite{n14} and  more general results of \cite{n20} again follow from ours as simple special cases. Similar statements hold for the areas and perimeters of the {\it described} random polygons considered in \cite{n14}, they also follow from Theorem \ref{tt7} with some $g(x)$.

\end{ex}

\begin{ex}{\it Sum of the distances from the center to the vertices of  described polygon}
\label{exx3}

Let us consider now an example of  kernel   not arising earlier in the literature on  the limit behavior of $ U $-$\max$ statistics. We define  kernel $f: \left(S^1\right)^m \rightarrow \mathbb{R} \cup \{+\infty\}$ as follows: construct the described convex $ m $-gon with vertices at points $ A_1, \ldots, A_m $ such that its sides touch the circle $ S^1 $ at points $ U_1, \ldots, U_m. $ Define the  function $$ f(U_1, \ldots, U_m) = \sum \limits_{i = 1}^m |OA_i| $$ as the sum of  distances from the center to the vertices of the described $ m $-gon. It is possible that some vertex $ A_i $ goes to infinity, in this case we define $ f(U_1, \ldots, U_m) = + \infty. $

Function $ f $ can be written in the form (\ref{f22}), where $g(x)=(\cos{(\frac{x}{2})})^{-1},$ if $ 0 \le x <\pi, $ and $ g(x) = + \infty $ otherwise. The case $ g(x) = + \infty $ is possible where some vertex $ A_i $ goes to infinity.
Note that
$$
g''(x)=\frac{1+\sin^2{(\frac{x}{2})}}{4\cos^3{(\frac{x}{2})}} >0 \quad  \text{ for }\quad x \in [0, \pi),
$$
therefore,  function is strictly convex on $  [0, \pi). $ By Corollary \ref{sl2}, the minimum will be attained at the vertices of the regular $m$-gon, and for $ U $-$\min$  statistics $ H_n $ generated by  kernel $ f $ we have  for any $ t> 0 $:
$$\lim_{n \rightarrow \infty} \mathbb{P}\Big\lbrace n^{\frac{2m}{m-1}} \left(H_n-\frac{m}{\cos{\frac{\pi}{m}}}\right)\le t \Big\rbrace=1 -e^{-\frac{t^{\frac{m-1}{2}}K}{m}},$$ where
$$K=\frac{\left(8\pi \cos^3{\frac{\pi}{m}} \right)^{\frac{m-1}{2}}\left[\int_{0}^{2\pi} \prod_{l=0}^{m-1} p(x+\frac{2\pi l}{m}) \, dx \right]}{ \left(1+\sin^2{\frac{\pi}{m}}\right)^{\frac{m-1}{2}}\Gamma\left(\frac{m+1}{2}\right)\sqrt{m}}.$$
\end{ex}

\begin{ex} {\it Generalized  perimeter of the polygon}
\label{exx6}

In \cite{n21}, \cite{n22} a  definition of  generalized perimeter was introduced. Generalized perimeter of order $ y $ is the sum of the $ y$-th degrees of the side lengths of a convex inscribed polygon constructed on given $ m $ points. This function may also be written as $f(U_1, \ldots, U_m)=\sum_{i=1}^mg(\beta_i-\beta_{i-1}),$ where $g(x)=2^y\sin^y(\frac{x}{2}).$ Function $ g $ is convex for negative $ y $ and concave for $ y \in (0,1]; $ therefore, the limit relation obtained in these cases for $ U $-$\min$ and $ U $-$\max$ statistics, respectively,  is also a special case of the equality (\ref{f21}) (see \cite{n21} ).

For $ y \in (1,2]$ and  $m = 3 $,  function $ g $ is not  concave but  function $ f $  also attains its maximum only on the vertices of  regular triangle, therefore  relation (\ref{f23}) with  constant  (\ref{f21}) from  Theorem \ref{tt7} also holds true in this case. This was shown in \cite{n22}.
\end{ex}

\begin{ex} {\it Further generalization of the perimeter}
\label{exx4}

The concept of  generalized perimeter introduced in \cite{n21} may be generalized further.
Suppose that  kernel $ f $ is given by
\begin{align}
\label{f25}
f(U_1, \ldots, 	U_m)=\sum_{i=1}^m r(|U_iU_{i+1}|)
\end{align}
where $r$ is a function$\colon [0,2] \rightarrow \mathbb{R}$  and the points $ U_1, \ldots, U_m $  are ordered counterclockwise, and also assume that  kernel  is symmetric. By $|U_{i}U_{i+1}|$ we denote the length of the side of the polygon.
 The generalized perimeter introduced in \cite{n21} and \cite{n22} corresponds to  function $ r (x) = x^y. $

If  function $r$ is continuous, strictly concave, increasing,   three times continuously differentiable in some neighborhood of the point $2\sin{\frac{\pi}{m}}$ and $r''(2\sin{\frac{\pi}{m}}) \ne 0,$ then  function $g(x) = r(2\sin{\frac{x}{2}})$  is  strictly convex. By Сorollary \ref{sl1}, we can write the limit relation for $ U $-$\max$ statistics from Theorem \ref{tt7}. It is also possible to replace the conditions of strict concavity and increasing  by the conditions of concavity and strict increasing.

Similarly, if $ r $ is a continuous strictly convex decreasing function which is three times continuously differentiable in some  neighborhood of the point $2\sin{\frac{\pi}{m}},$ and $r''(2\sin{\frac{\pi}{m}}) \ne 0,$ then the function $g(x) = r(2\sin{\frac{x}{2}})$  is  strictly convex. By Сorollary \ref{sl2}, we can write the limit relation for $ U $-$\min$ statistics. The condition of strict convexity and decreasing may be replaced by the condition of convexity and strict decreasing.
\end{ex}

\begin{ex}
\label{exx5}
Let us use  function $r(x)=e^{-ax}x^b (\ln{\left(\frac{x}{2}\right)})^c$ in Example \ref{exx4}. Such functions were considered  by Alexander and Stolarsky in \cite{n24}.
Denote by   $ \tau(a, b, c) $ the function which is equal to 1 if function $ r $ is strictly concave and increasing, and  is equal to $-1$ if function $ r $ is strictly convex and decreasing.
Then the following equality from \cite{n24} is true:
\begin{align*}
\tau(a,b,c)=\begin{cases}
(-1)^c, &\text{ if  } a \ge 0, b \le 0, c \in \mathbb{N},\\
-1, &\text{ if  } a \ge 0, b \le 0, c=0, a^2+b^2 \ne 0,\\
1, &\text{ if  } a=0, 0<b \le 1, c=0,
\end{cases}
\end{align*}
Using the arguments from the previous example, we may study the limit behavior of $ U $-$\max$ (respectively $ U $-$\min$) statistics $ H_n $ with  kernel $ f, $ constructed by  $ (\ref{f25}), $ in the case where $ \tau = 1 \,$ (respectively $\tau = -1$).  It can be simply done using $ g(x) = r (2 \sin{\frac{x}{2}} )$ in  Theorem \ref{tt7}.
  \end{ex}

\section{Limit behavior of $ U $-max statistics for several   functions depending on the side lengths and diagonals of a polygon}
\label{edg2}

In this section, the arguments are very similar to  those in Section  \ref{edg1}. We define  kernel $ f $ and  corresponding function $ h $ by the set of angles $(\beta_1, \ldots\beta_{m-1})$  introduced in (\ref{bet1}).   The connection between   $ f $ and $ h $ is  given by   (\ref{f27}). We again define the function $ h $ using a continuous function $ g: [0, 2 \pi] \rightarrow \mathbb{R}$ but in a different way. The restrictions  on  function $ g $ will be different, and the function $ h $ itself is defined via the following analogue of   (\ref{f22}):
\begin{align}
\label{f24}
f(U_1, \ldots, U_m) =h(\beta_1, \ldots, \beta_{m-1})  = \sum_{0 \le i<j \le m-1}g(|\beta_j-\beta_{i}|), \text{ where } \beta_0=0.
\end{align}

In other words, in this section we consider {\it functions depending on the angles between any pairs of points $ U_i $ and $ U_j, $ but not only on the angles between adjacent points,} as was done in the last section.

Let us describe the extreme points of  function $ f $ for concave $ g $ having some symmetry property. This is an analogue of Theorem \ref{tt6}.

\begin{theorem}
\label{tt9}
Suppose that  function $ g: [0, 2 \pi] \rightarrow \mathbb{R}$ is continuous strictly concave function such that $ g(x) = g (2\pi-x). $ Then  function $ f $ defined in  \eqref{f24} attains its maximum only at the vertices of  regular $ m $-gon and its maximal value is equal to  $ \frac{1}{2} \sum_ {s = 1}^{m-1} mg \left(\frac{2 \pi s}{m} \right). $

\end{theorem}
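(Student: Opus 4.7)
The plan is to mimic the proof of Theorem \ref{tt6}: rewrite $f$ as a symmetric sum of $g$-values over chord arcs, then apply Jensen's inequality for each ``scale'' separately. The symmetry $g(x)=g(2\pi-x)$ is what lets us do the rewriting unambiguously.

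First I would reduce to a cyclically ordered configuration. Since $f$ is rotation-invariant and symmetric in its arguments, I may assume the points are labelled $P_1,\ldots,P_m$ in counterclockwise order. Let $\alpha_i$ denote the arc from $P_i$ to $P_{i+1}$ (indices mod $m$), so $\alpha_1+\cdots+\alpha_m=2\pi$. For $s\in\{1,\ldots,m-1\}$ and $i\in\{1,\ldots,m\}$ set
$$A_{i,s}=\alpha_i+\alpha_{i+1}+\cdots+\alpha_{i+s-1},$$
the length of the counterclockwise arc from $P_i$ to $P_{i+s}$. Note that $A_{i,s}+A_{i+s,m-s}=2\pi$.

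Next I would establish the key identity
$$f(U_1,\ldots,U_m)=\frac{1}{2}\sum_{s=1}^{m-1}\sum_{i=1}^{m}g(A_{i,s}).$$
Each unordered pair $\{P_i,P_{i+s}\}$ appears in the double sum exactly twice, once as $g(A_{i,s})$ and once as $g(A_{i+s,m-s})=g(2\pi-A_{i,s})$, and by the assumption $g(x)=g(2\pi-x)$ these two contributions are equal and give precisely $g$ evaluated on the pairwise angle, matching the summand in \eqref{f24}.

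Finally I would apply Jensen. For fixed $s$ each $\alpha_j$ appears in exactly $s$ of the sums $A_{i,s}$, so $\sum_{i=1}^m A_{i,s}=2\pi s$. Since $g$ is strictly concave,
$$\sum_{i=1}^{m}g(A_{i,s})\le m\, g\!\left(\frac{2\pi s}{m}\right),$$
with equality iff all $A_{i,s}$ ($i=1,\ldots,m$) coincide. Summing over $s$ gives
$$f\le\frac{1}{2}\sum_{s=1}^{m-1} m\, g\!\left(\frac{2\pi s}{m}\right),$$
which is the value claimed for the maximum. The equality case in the $s=1$ inequality alone forces $\alpha_1=\cdots=\alpha_m=2\pi/m$, i.e., the regular $m$-gon; conversely at the regular configuration equality holds for every $s$. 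Hence the maximum is attained and attained only there.

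The main obstacle I anticipate is the bookkeeping in the identity above: one must carefully verify the involution $(i,s)\leftrightarrow(i+s,m-s)$ on the index set pairs up the summands correctly and that the hypothesis $g(x)=g(2\pi-x)$ is exactly what is needed to identify the two arcs of a chord. Once this identity is in place, the argument is a direct application of Jensen's inequality, entirely parallel to Theorem \ref{tt6}.
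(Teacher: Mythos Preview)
Your proposal is correct and is essentially the paper's own argument: your inner sum $\sum_{i=1}^m g(A_{i,s})$ is precisely the paper's $P(s,U_1,\ldots,U_m)$, and both proofs apply Jensen to each ``skip level'' $s$ separately and sum. If anything, you are more explicit than the paper about why the hypothesis $g(x)=g(2\pi-x)$ is needed to justify the decomposition $f=\tfrac12\sum_{s} P(s)$, which the paper asserts without comment.
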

\begin{proof}

Without loss of generality, we assume that the vertices $ U_1, \ldots, U_m $ are ordered counterclockwise.
Denote by
$P(k, U_1, \ldots, U_m)$ the sum  $\sum_{i=1}^{m} g(\beta_{i+k}-\beta_i)$  assuming that $\beta_0=0, \beta_{m+s}=2\pi+\beta_s$ for $ s \in \{0, \ldots, m-1 \}. $
Then $$f(U_1, \ldots, U_m)=\frac{1}{2}\sum_{i=1}^{m-1} P(i,U_1, \ldots, U_m).$$
We prove that the maximum of $ P (k, U_1, \ldots, U_m) $ is attained only on the regular $ m$-gon.

Due to strict concavity
\begin{align*}
P(k, U_1, \ldots, U_m)=\sum_{i=1}^{m} g(\beta_{i+k}-\beta_i) \le m g\left(\frac{2\pi k}{m}\right),
\end{align*}
and  equality is achieved only when $ \beta_{i + k} - \beta_i = \frac{2 \pi k}{m} $ for all $i$ and $k.$ Therefore, the maximal value is attained only at the vertices of the regular polygon.

\end{proof}

Now we state an analogue of Theorem \ref{tt7} for  kernel $ f $ of the form (\ref{f24}).

\begin{theorem}
\label{tt8}
Suppose that the points $ U_1, \ldots, U_n $ are independently distributed on $ S^1 $ with a common continuous density $ p(x) $ such that  $\int_{0}^{2\pi} \prod_{l=0}^{m-1} p(x+\frac{2\pi l}{m})\, dx >0.$
Consider a continuous function $g\colon[0, 2\pi] \rightarrow \mathbb{R}$ which is  three times continuously differentiable in the neighborhoods of points $ \frac{2 \pi s}{m} $ for all $ s \in \{1, \ldots, m-1 \}$ and such that $ g(x) = g(2 \pi-x). $
We construct  function $ f $  by equality \eqref{f24} and suppose that it attains its maximum only at the vertices of a regular $ m$-gon. Let $ H_n $ be $ U $-$\max$ statistics with  kernel $ f. $
Consider the  symmetric matrix  $G=(g_{i,j})_{i,j=1}^{m-1},$ where
 \begin{align}
 \label{f29}
g_{i,j}=\begin{cases}
-g''\left(\frac{2\pi|i-j|}{m}\right), \text{ if } i \ne j,\\
\sum\limits_{s=1}^{m-1}g''\left(\frac{2\pi s}{m}\right), \text{ if } i=j.
\end{cases}
 \end{align}
If $\det{G}\ne 0,$ then, for any $ t>0, $ the following limit relation holds:
\begin{align}
\label{f26}
\lim_{n \rightarrow \infty} \mathbb{P}\Big\lbrace n^{\frac{2m}{m-1}} \left(\frac{1}{2}\sum_{s=1}^{m-1}mg\left(\frac{2\pi s}{m}\right)-H_n\right)\le t\Big\rbrace=1 -e^{-\frac{t^{\frac{m-1}{2}}K}{m}},
\end{align}
 where
$$K=\frac{\left(2\pi\right)^{\frac{m-1}{2}}\left[\int_{0}^{2\pi} \prod_{l=0}^{m-1} p(x+\frac{2\pi l}{m})\, dx \right]}{ \sqrt{\det(-G)}\Gamma\left(\frac{m+1}{2}\right)}.$$
\end{theorem}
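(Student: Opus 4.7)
The plan is to reduce Theorem \ref{tt8} to Corollary \ref{z8} by verifying that the kernel $f$ defined via (\ref{f24}) satisfies Conditions A and B at the ordered configuration
\[
V^{\ast}=\left(\tfrac{2\pi}{m},\tfrac{4\pi}{m},\dots,\tfrac{2\pi(m-1)}{m}\right),
\]
and then identifying the Hessian of $h$ at $V^{\ast}$ explicitly as the matrix $G$ from (\ref{f29}).

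First I would check Conditions A and B. Rotation invariance (A1) is built into (\ref{f24}) via the central angles, and continuity (A3) is inherited from $g$. Symmetry of $f$ in the $U_i$'s (A2) is where the hypothesis $g(x)=g(2\pi-x)$ becomes essential: it allows each term $g(|\beta_j-\beta_i|)$ to be reinterpreted as a function of the unordered angular distance between $U_{i+1}$ and $U_{j+1}$, so $f$ reduces to an unambiguous sum over unordered pairs of points. Condition A4 coincides with the hypothesis that the regular $m$-gon is the unique maximum configuration, and all components of $V^{\ast}$ lie strictly inside $(0,2\pi)$. For A5, near $V^{\ast}$ the absolute values disappear and each summand $g(\beta_j-\beta_i)$ is $C^{3}$ near $2\pi(j-i)/m$ by assumption. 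B1, B2 are assumed, and B3 follows immediately from the positivity hypothesis $\int_{0}^{2\pi}\prod_{l=0}^{m-1}p(x+2\pi l/m)\,dx>0$ applied with $V_{\ast}=V^{\ast}$.

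The main computational step is the Hessian at $V^{\ast}$. In the cell $0<\beta_{1}<\dots<\beta_{m-1}<2\pi$ one may drop the absolute values and differentiate term by term to get
\[
\frac{\partial^{2}h}{\partial\beta_{k}\partial\beta_{l}}(V^{\ast})=-g''\!\left(\tfrac{2\pi(l-k)}{m}\right),\qquad k\neq l,
\]
\[
\frac{\partial^{2}h}{\partial\beta_{k}^{2}}(V^{\ast})=\sum_{s=1}^{k}g''\!\left(\tfrac{2\pi s}{m}\right)+\sum_{s=1}^{m-1-k}g''\!\left(\tfrac{2\pi s}{m}\right).
\]
Differentiating $g(x)=g(2\pi-x)$ twice gives $g''(x)=g''(2\pi-x)$, so the substitution $s\mapsto m-s$ turns the first partial sum on the diagonal into a sum over $\{m-k,\dots,m-1\}$. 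Since $\{m-k,\dots,m-1\}$ and $\{1,\dots,m-1-k\}$ are disjoint and their union is $\{1,\dots,m-1\}$, the diagonal collapses to $\sum_{s=1}^{m-1}g''(2\pi s/m)$, independent of $k$. Hence the Hessian at $V^{\ast}$ coincides, up to the sign convention in (\ref{f29}), with $G$; the assumption $\det G\neq 0$ then supplies Condition A6.

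Finally, since the regular polygon is the unique unordered maximum configuration, it generates a single permutation orbit in $h$-coordinates, so Corollary \ref{z8} applies with $r=1$ and $W_{1}=V^{\ast}$. Substituting $V^{\ast l}=2\pi l/m$ into the integral appearing in $K$ yields $\int_{0}^{2\pi}\prod_{l=0}^{m-1}p(x+2\pi l/m)\,dx$, and inserting the Hessian determinant reproduces exactly (\ref{f26}). I expect the most delicate point to be the diagonal of the Hessian: naive termwise differentiation makes each diagonal entry look $k$-dependent, and only the symmetry of $g$ collapses both partial sums into the single $k$-independent quantity in (\ref{f29}). Without that symmetry the matrix $G$ in (\ref{f29}) would not even be the true Hessian, and the machinery of Corollary \ref{z8} could not be applied.
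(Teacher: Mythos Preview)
Your proposal is correct and follows exactly the route the paper indicates: the paper omits the proof of Theorem~\ref{tt8}, stating only that it is similar to the proof of Theorem~\ref{tt7}, i.e.\ verify Conditions~A and~B, restrict to the ordered representative $V^{\ast}$, compute the Hessian there, and invoke Corollary~\ref{z8}. Your Hessian computation, including the use of $g''(x)=g''(2\pi-x)$ to collapse the $k$-dependent diagonal into $\sum_{s=1}^{m-1}g''(2\pi s/m)$, is precisely the missing detail, and shows that the matrix $G$ of~(\ref{f29}) is literally the Hessian of $h$ at $V^{\ast}$ (no sign adjustment is needed).
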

The proof of this theorem is similar to the proof of Theorem \ref{tt7} and therefore omitted.

\begin{rem}
\label{rr3}
If the values of  second derivatives at the points $ \frac {2 \pi s}{m} $ are negative for all $ s \in \{1, \ldots, m-1 \}, $ then $g_{i,i}>\sum_{j=1 \\i \ne j}^{m-1} |g_{i,j}|.$ Matrices with such a property  are called   diagonally dominant and    according to \cite[Ch. 6, \S 1,  p. 392, Th. 6.1.10]{MA} the determinants of such matrices are non-zero.
\end{rem}
\begin{rem}
The matrix $ G $ is a Toeplitz matrix.
\end{rem}

Combining the results of Theorems \ref{tt9} and \ref{tt8}, we obtain the following Corollary:
\begin{sled}
If  function $g: [0, 2\pi] \rightarrow \mathbb{R}$ is continuous, strictly concave,  three times continuously differentiable in the  neighborhoods of points $ \frac{2 \pi s}{m} $ for all $ s \in \{1, \ldots, m-1 \}, $ and also has the property $ g(x) = g(2 \pi-x), $  and  if probability density $p$ is continuous and $\int_{0}^{2\pi} \prod_{l=0}^{m-1} p(x+\frac{2\pi l}{m})\, dx >0,$   then for $ U $-$\max$ statistics $ H_n $ with kernel $ f $ of the form \eqref{f24}  relation \eqref{f26} of Theorem \textup{\ref{tt8}} holds.

\end{sled}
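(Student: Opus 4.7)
The plan is to verify the hypotheses of Theorem \ref{tt8} and then invoke it directly. Most of them are present verbatim in the corollary's statement---the smoothness of $g$ near each $2\pi s/m$, the symmetry $g(x)=g(2\pi-x)$, and the positivity of $\int_{0}^{2\pi}\prod_{l=0}^{m-1}p(x+2\pi l/m)\,dx$---so only two items require actual work: showing (i) that the kernel $f$ of the form \eqref{f24} attains its maximum only at the vertices of a regular $m$-gon, and (ii) that the Toeplitz matrix $G$ from \eqref{f29} satisfies $\det G\ne 0$.

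For (i) I would quote Theorem \ref{tt9}: its hypotheses are exactly continuity, strict concavity, and the symmetry $g(x)=g(2\pi-x)$, all of which are assumed in the corollary. Its conclusion both fixes the unique extremal configuration and delivers the explicit value $M=\frac{1}{2}\sum_{s=1}^{m-1}m g(2\pi s/m)$ that appears in the normalization of \eqref{f26}. For (ii) I would invoke Remark \ref{rr3}: the $C^{2}$-smoothness of $g$ near each $2\pi s/m$ together with strict concavity on $[0,2\pi]$ yields $g''(2\pi s/m)\le 0$, and generically these inequalities are strict. Under that condition the diagonal entries $g_{i,i}=\sum_{s=1}^{m-1}g''(2\pi s/m)$ pick up $|g''(2\pi s/m)|$ for every $s\in\{1,\ldots,m-1\}$, whereas the row sums of the off-diagonal entries use the symmetry $g''(2\pi s/m)=g''(2\pi(m-s)/m)$ and miss at least one such term; thus $-G$ is strictly diagonally dominant and therefore nonsingular.

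Once (i) and (ii) are in hand, Theorem \ref{tt8} applies without further modification and produces precisely the limit relation \eqref{f26}. The main obstacle is step (ii): strict concavity of a $C^{2}$ function does not in itself rule out isolated zeros of the second derivative (the classical example $g(x)=-x^{4}$ at $x=0$ exhibits this), so the strict negativity of $g''(2\pi s/m)$ is not a purely formal consequence of the hypotheses. In all concrete examples treated in Section \ref{edg1}---sines, powers, exponentials and the functions of Alexander--Stolarsky type---this refinement is immediate; if required, one may add $g''(2\pi s/m)\ne 0$ as an explicit assumption, at which point the corollary becomes a routine consequence of Theorems \ref{tt9} and \ref{tt8}.
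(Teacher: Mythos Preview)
Your approach is exactly the paper's: the ``proof'' given there is the single sentence \emph{Combining the results of Theorems \ref{tt9} and \ref{tt8}, we obtain the following Corollary}, so steps (i) and (ii) are precisely what is being combined.

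Your observation about (ii) is more careful than the paper itself. The paper does not verify $\det G\ne 0$ here; it simply appeals to Theorem~\ref{tt8}, for which this is a hypothesis, and tacitly relies on Remark~\ref{rr3}. As you note, strict concavity of a $C^2$ function does not force $g''(2\pi s/m)<0$ at each of the finitely many nodes, so Remark~\ref{rr3} need not apply. In the analogous Corollary~\ref{sl1} of Section~\ref{edg1} the authors do add the explicit condition $g''(2\pi/m)\ne 0$; the present corollary omits the corresponding requirement, and your suggested fix---adding $g''(2\pi s/m)\ne 0$ (equivalently $<0$) as an explicit assumption---is the natural repair. Your computation of the strict diagonal dominance of $-G$ via the symmetry $g''(2\pi s/m)=g''(2\pi(m-s)/m)$, showing that the excess in row $i$ is exactly $|g''(2\pi i/m)|$, is correct and makes the role of that hypothesis transparent.
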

Similarly to Section \ref{edg1}, this statement can be reformulated for $ U $-$\min$ statistics and strictly convex functions $ g. $

\begin{rem}
One of the main differences between Sections \ref{edg2} and \ref{edg1} is the condition $ g(x) = g (2 \pi-x),$ which  was not involved in Section \ref{edg1}. It gives the equality of  elements on the main diagonal of the matrix $ G $ introduced in Theorem \ref{tt8}. This condition also implies that  $ g (| \beta_i- \beta_j |) = r (| U_iU_ {j} |), $ where $ | U_iU_{j} | $ is the length of the segment $ U_iU_j. $

Thus, by analogy with Example \ref{exx4}, we may say that all the functions $ f $ satisfying (\ref{f24}) are the generalized sums of pairwise distances between points. This shows that functions $g$ from  Examples \ref{exx2} and \ref{exx3} cannot be used for constructing  functions $ f $ in (\ref{f24}).
This property holds for  function $g$ from Examples \ref{exx1}, \ref{exx6}, \ref{exx4}, \ref{exx5}.  Such functions $g$ are also differentiable on $ (0, 2 \pi), $ and Remark \ref{rr3} holds for them. Therefore,  functions $ f $ constructed in these examples by  (\ref{f24})  satisfy Theorem \ref{tt8}.
\end{rem}

\begin{ex}
As already mentioned,  function $ g(x) $  introduced in Example \ref{exx5} may be used in this case as well.
\end{ex}

\begin{ex}{\it Sum of pairwise distances between vertices}
\label{exx8}

Consider the simple case where $g(x)$ is defined in Example \ref{exx1}, and the resulting function $ f (U_1, \ldots, U_m) $ is the sum of the pairwise distances between  points $ U_1, \ldots, U_m . $
According to paper \cite{n25}, the maximum is attained only at the vertices of the regular polygon and is equal to $ m \cot{\frac{\pi}{2m}}.$
The limit relation \eqref{f26} holds but  we did not manage to calculate the exact constant $K$ in it for an arbitrary $ m.$
The matrix $ -G $ involved in Theorem \ref{tt8} has the following form:
$$\begin{pmatrix}
\frac{1}{2} \cot{ \frac{\pi}{2m}} & -\frac{1}{2} \sin{\left(\frac{\pi }{m}\right)} & -\frac{1}{2} \sin{\left(\frac{2\pi }{m}\right)} &  \ldots & -\frac{1}{2} \sin{\left(\frac{(m-2)\pi }{m}\right)}\\
-\frac{1}{2} \sin{\left(\frac{\pi }{m}\right)}& \frac{1}{2} \cot{ \frac{\pi}{2m}} & -\frac{1}{2} \sin{\left(\frac{\pi }{m}\right)} &  \ldots & -\frac{1}{2} \sin{\left(\frac{(m-3)\pi }{m}\right)}\\
-\frac{1}{2} \sin{\left(\frac{2\pi }{m}\right)}& -\frac{1}{2} \sin{\left(\frac{\pi }{m}\right)}&\frac{1}{2} \cot{ \frac{\pi}{2m}} &   \ldots & -\frac{1}{2} \sin{\left(\frac{(m-4)\pi }{m}\right)}\\
\vdots & \vdots & \vdots  & \ddots &\vdots\\
 -\frac{1}{2} \sin{\left(\frac{(m-2)\pi }{m}\right)} &  -\frac{1}{2} \sin{\left(\frac{(m-3)\pi }{m}\right)} &  -\frac{1}{2} \sin{\left(\frac{(m-4)\pi }{m}\right)}&   \ldots & \frac{1}{2} \cot{ \frac{\pi}{2m}}\\
\end{pmatrix}.
$$
This is a Toeplitz matrix, but we could not calculate its determinant in a general form. However, we calculated it for the small values of $m$:
\begin{align*}
& \det(-G)=\frac{9}{16} =0.5625  \text{ for } m=3;\\
& \det(-G)=\frac{3\sqrt{2}+4}{8}\approx 1.03033 \text{ for } m=4;\\
&  \det(-G)=\frac{175+75\sqrt{5}}{128}\approx 2.6773835  \text{ for } m=5;\\
& \det(-G)=\frac{168\sqrt{3}+291}{64} \approx 9.0935 \text{ for } m=6;\\
\end{align*}
\end{ex}
\begin{ex} { \it The pairwise sum of the inverse distances between the vertices}

Consider $g(x)=(2\sin \frac{x}{2})^{-1}$  and construct  kernel $ f $ using  formula (\ref{f24}). It is easy to see that the constructed kernel $ f $ is equal to $ f(U_1, \ldots, U_m) = \sum_{j> i} \frac{1}{| U_iU_j |}, $ that is, the sum of the inverse distances between the vertices $ U_1, \ldots, U_m. $ This example was already considered by Toth  \cite{n25}.

The minimal value of  kernel $ f $ is attained only at the vertices of the regular polygon  and is equal to $\frac{m}{4} \sum_{k=1}^{m-1}\csc\frac{\pi k}{m}$ (see \cite{n25}).  The second derivative of  function $ g $ is equal to  $g''(x)=\frac{2-\sin^2{\frac{x}{2}}}{8\sin^3{\frac{x}{2}}},$ therefore the elements of the matrix $G$  defined in (\ref{f29}) have the following form:
 \begin{align*}
g_{i,j}=\begin{cases}
-\frac{2-\sin^2{\left(\frac{\pi |i-j|}{m}\right)}}{8\sin^3{\left(\frac{\pi |i-j|}{m}\right)}}, \text{ if } i \ne j,\\
\sum\limits_{s=1}^{m-1} \frac{2-\sin^2{\left(\frac{\pi s}{m}\right)}}{8\sin^3{\left(\frac{\pi s}{m}\right)}}\text{ if } i=j.
\end{cases}
 \end{align*}
 As in Example \ref{exx8}, we could not find a general formula for any $ m $, but again we provide some calculations for the small values of $m$. The value of $ M $ is equal to
 \begin{align*}
 &M=\sqrt{3} \approx 1.73205 \text{ for } m=3,\\
  &M= 2\sqrt{2}+1\approx 3.828427\text{ for } m=4,\\
 &M = \frac{10}{\sqrt{10-2\sqrt{5}}}+\frac{10}{\sqrt{10+2\sqrt{5}}} \approx 6.88191 \text{ for } m=5,\\
    &M=\frac{15}{2}+2\sqrt{3} \approx 10.96410 \text{ for } m=6.\\
 \end{align*}
The value of $ \det(G) $ in this example is equal to
  \begin{align*}
 &\det(G)=\frac{25}{144} \approx 0.17361 \text{ for } m=3,\\
  &\det(G)=\frac{57}{128}\sqrt{2}+\frac{9}{32} \approx 0.911017 \text{ for } m=4,\\
   &\det(G)=\frac{21847+7395\sqrt{5}}{3200} \approx 11.9946\text{ for } m=5,\\
    &\det(G)= \frac{2486141}{13824}+\frac{2224445\sqrt{3}}{27648}  \approx 319.19601 \text{ for } m=6.\\
    \end{align*}
\end{ex}

\section{Proof of the general theorem: part 1}
\label{doc}

We return to the proof of our general Theorem \ref{t2}. It can be  divided into 2 parts. The first part takes the form of the following statement.
\begin{theorem}
\label{t3}
Suppose that  kernel $f$ and  points $U_1, \ldots, U_n$ satisfy Conditions  \textup{A} and \textup{B} from  Section  \ref{tt1}. Let $H_n$ be the $U$-$\max$ statistics constructed via kernel $f$, that is,
$ H_n = \max\limits_{1 \le i_1< \ldots <i_m \le n} f(U_{i_1}, \ldots, U_{i_m}).$
Then the following two statements are true:
\begin{enumerate}
\item{ There exist  constants $C$ and $D $ such that, for any number $\varepsilon$,  $0<\varepsilon<C,$ if \\$f(U_1, \ldots, U_m) \ge M-\varepsilon,$ then
$\min\limits_{1\le i \le d}{\|V_i-\beta\|}\le D\sqrt{\varepsilon},$ where $ \beta $ is defined by  \eqref{bet1} and \eqref{bb1}, and $V_i$ is defined in Condition \textup{A4.}}

\item{The following relation holds true: $$\lim_{\varepsilon \rightarrow 0+} \varepsilon^{-\frac{m-1}{2}} \mathbb{P}\lbrace f(U_1, \ldots, U_m) \ge M-\varepsilon \rbrace = K,$$ where
    $K=\frac{\left(2\pi\right)^{\frac{m-1}{2}}}{\Gamma\left(\frac{m+1}{2}\right)}\sum\limits_{i=1}^k \left( \frac{1}{\det(-G_i)} \int\limits_0^{2\pi} \left(p(x) \prod\limits_{l=1}^{m-1} p\left(x+V_i^l\right) \right)\, dx\right).$}
\end{enumerate}
\end{theorem}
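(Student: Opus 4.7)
My strategy is to handle parts 1 and 2 in sequence, with part 1 providing the localization that part 2 needs. For part 1 I combine the compactness of $[0,2\pi]^{m-1}$ with a quadratic lower bound near each maximum. By Condition A4 the points $V_1,\ldots,V_k$ are isolated interior maxima of $h$, so I can pick disjoint closed balls $\overline{B(V_i,\delta_0)}$ on whose complement the continuous function $h$ satisfies $h\le M-c_0$ for some $c_0>0$. On each $B(V_i,\delta_0)$ Condition A5 gives the Taylor expansion
\[
h(\beta)=M+\tfrac12(\beta-V_i)^{\top}G_i(\beta-V_i)+O(\|\beta-V_i\|^3).
\]
Because $V_i$ is a maximum, $G_i$ is negative semi-definite, and Condition A6 promotes this to strict negative definiteness. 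Writing $\lambda_i>0$ for the smallest eigenvalue of $-G_i$ and shrinking $\delta_0$ if necessary, I obtain $h(\beta)\le M-\tfrac{\lambda_i}{4}\|\beta-V_i\|^2$ on $B(V_i,\delta_0)$. Comparing $\varepsilon$ to $c_0$ and to the smallest of $\tfrac{\lambda_i}{4}\delta_0^2$ then yields constants $C$ and $D$ with the required property.

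For part 2 I pass from the $U_j$'s to angles. Writing $U_j=(\cos\theta_j,\sin\theta_j)$ with $\theta_j$ independent of density $p$, the map $(\theta_1,\ldots,\theta_m)\mapsto(\theta_1,\beta_1,\ldots,\beta_{m-1})$ with $\beta_l=\theta_{l+1}-\theta_1\bmod 2\pi$ is volume-preserving, so the joint density becomes $p(\theta_1)\prod_{l=1}^{m-1}p(\theta_1+\beta_l)$. By part 1 the balls $B(V_i,D\sqrt{\varepsilon})$ are pairwise disjoint once $\varepsilon$ is small, and together they cover $\{h(\beta)\ge M-\varepsilon\}$, so
\[
\mathbb{P}\{f\ge M-\varepsilon\}=\sum_{i=1}^{k}\int_{\Omega_i(\varepsilon)}\Bigl(\int_0^{2\pi}p(\theta_1)\prod_{l=1}^{m-1}p(\theta_1+\beta_l)\,d\theta_1\Bigr)d\beta,
\]
where $\Omega_i(\varepsilon)=\{\beta\in B(V_i,D\sqrt{\varepsilon}):h(\beta)\ge M-\varepsilon\}$.

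In each summand I substitute $\beta=V_i+\sqrt{2\varepsilon}\,z$. The Jacobian contributes $(2\varepsilon)^{(m-1)/2}$, and the Taylor remainder turns $\Omega_i(\varepsilon)$ into a set $E_i(\varepsilon)$ sandwiched between $\{z:z^{\top}(-G_i)z\le 1-\eta(\varepsilon)\}$ and $\{z:z^{\top}(-G_i)z\le 1+\eta(\varepsilon)\}$ with $\eta(\varepsilon)\to 0$, so $\mathrm{Vol}(E_i(\varepsilon))\to\mathrm{Vol}(E_i)=\pi^{(m-1)/2}/\bigl(\Gamma(\tfrac{m+1}{2})\sqrt{\det(-G_i)}\bigr)$ by the classical ellipsoidal-volume formula. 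Uniform continuity of $p$ on $[0,2\pi]$ lets me replace the inner integral on $\Omega_i(\varepsilon)$ by its value $q_i=\int_0^{2\pi}p(x)\prod_{l=1}^{m-1}p(x+V_i^l)\,dx$ at $\beta=V_i$ with error $o(1)$. Summing and dividing by $\varepsilon^{(m-1)/2}$ yields the claimed limit
\[
K=\sum_{i=1}^{k}\frac{(2\pi)^{(m-1)/2}\,q_i}{\Gamma(\tfrac{m+1}{2})\sqrt{\det(-G_i)}}.
\]

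The main obstacle is the joint uniformity in the last step: I have to ensure simultaneously that the cubic Taylor remainder and the density oscillation on $B(V_i,D\sqrt{\varepsilon})$ produce errors that are $o(1)$ after rescaling, and that the rescaled sublevel sets $E_i(\varepsilon)$ converge to the bounded ellipsoid $E_i$ not merely pointwise but in Lebesgue measure. Once that is established, the remaining computation is routine.
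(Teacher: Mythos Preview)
Your plan is correct and follows essentially the same route as the paper: compactness plus the nondegenerate Taylor expansion at each $V_i$ for part 1, then localization to disjoint neighborhoods, reduction to the density integral $q_i$ by continuity, and the ellipsoidal volume formula $\pi^{(m-1)/2}/(\Gamma(\tfrac{m+1}{2})\sqrt{\det(-G_i)})$ for part 2. The only cosmetic difference is that the paper sandwiches the sublevel set between two quadratic forms with perturbed matrices $A^i(\pm\varepsilon)=\tfrac12 G_i\pm M_2 S(\varepsilon)I$ and extracts $q_i$ via the integral mean-value theorem, whereas you rescale $\beta=V_i+\sqrt{2\varepsilon}\,z$ and sandwich between $\{z^\top(-G_i)z\le 1\pm\eta(\varepsilon)\}$; both devices encode the same cubic-remainder control and lead to the identical constant.
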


\begin{proof}
It is clear that
$$\mathbb{P}\{f(U_1, \ldots, U_m)>z\} = \mathbb{P}\{h(\beta_1, \ldots, \beta_{m-1})>z\},$$
where $ \beta_i $ are random angles defined in $ \eqref{tt1}. $
Further we  deal with   function $ h $ only.

 Let us define for every $ \varepsilon> 0 $ the  number
 \begin{align}
 \label{ss}
 \hat{S}\left(\varepsilon\right) = \min\lbrace\, s \ge 0 \mid \forall x \in [0,2\pi]^{m-1}: M-f(x) \le \varepsilon \Rightarrow \exists i: \left\|x-V_i\right\| \le s \, \rbrace.
 \end{align}
In other words, $ \hat{S}(\varepsilon) $ is   minimal radius of balls with centers in $ V_i, i = 1,\ldots, k, $ such that  if the value of  function  differs from the maximum value by less than $ \varepsilon, $ then the argument of the  function must lie in one of the balls.
For any $ \varepsilon> 0 $, this minimal radius obviously exists.
Also let us define for any $\varepsilon>0$
\begin{align}
\label{ss1}
S(\varepsilon)=\max(\hat{S}(\varepsilon), \varepsilon^{\frac{1}{3}}).
\end{align}
 It is easy to show that
\begin{align}
 \label{lemm1}
 \lim\limits_{\varepsilon \rightarrow +0}S\left(\varepsilon\right) =0.
 \end{align}
 Indeed,  \eqref{lemm1} is equivalent to the limit relation  $ \lim\limits_{\varepsilon \rightarrow +0}\hat{S}\left(\varepsilon\right) =0. $  Function $ \hat{S}(x) $ is  non-decreasing and  non-negative, therefore the limit $\lim\limits_{\varepsilon \rightarrow +0}\hat{S}(\varepsilon)$ exists and is non-negative. We denote it by $a$ and suppose that $a>0$ (otherwise \eqref{lemm1}  is proved).	
Then for any $\varepsilon>0$  there exists $x_{\varepsilon} \in [0,2\pi]^{m-1}$ such that $\min_{1 \le i \le k}\|V_i-x_{\varepsilon}\| \ge a $ and $|h(x_{\varepsilon})-M|\le \varepsilon.$ Let $\varepsilon_n = \frac{1}{n}.$ Then the infinite sequence $x_{\varepsilon_n}, $  which belongs to the compact set $[0,2\pi]^{m-1}, $ has a convergent subsequence with some limit   $x^{*}.$
 By construction, $x^{*} \ne V_i\text{ for any } i,$  and the continuity of $h $ implies that $ h(x^{*}) = M. $ This contradiction proves \eqref{lemm1} .

Equality  \eqref{lemm1} implies that,  for sufficiently small $ \varepsilon,$  $ S (\varepsilon)$-neighborhoods of points $ V_1, \ldots, V_k $ have empty intersection.
Hence, by definitions  \eqref{ss} and \eqref{ss1} for sufficiently small $ \varepsilon $ the following equality is valid:
\begin{equation}
\label{sum}
\mathbb{P}\{h(\beta)\ge M-\varepsilon\} =\sum_{i=1}^k \mathbb{P}\{h(\beta) \ge M-\varepsilon, \|V_i-\beta\|\le S(\varepsilon) \}.
\end{equation}

Let us fix some $i \in \{1, \ldots, k\}.$  Assume that the following event happens for some  $\varepsilon>0:$
\begin{align}
\label{f4}
h(\beta)=h(\beta_1, \ldots, \beta_{m-1}) \ge M-\varepsilon,\, \|V_i-\beta\|\le S(\varepsilon).
\end{align}

By  \eqref{lemm1} there exists $ \varepsilon_0> 0 $ such that $ S(\varepsilon_0) <\frac{\delta}{2}, $ where $ \delta $ is the  number from Condition A5. Function $ S(\varepsilon) $ is  non-decreasing, therefore for any positive $ \varepsilon<\varepsilon_0 $ we have
\begin{align}
\label{f7}
S(\varepsilon)<\frac{\delta}{2}.
\end{align}
Below we deal with $ \varepsilon<\varepsilon_0 $ only.
Since  function $ h $ is three times continuously differentiable in the $ \delta$-neighborhood of any maximal point, in this neighborhood we consider the Taylor expansion of  function $h$ at the point $ V_i $ with the third order remainder. For this purpose we introduce the  notation:
\begin{align}
\label{f17}
\alpha_j = \beta_j-V_i^j \text{ and } \alpha=(\alpha_1, \ldots, \alpha_{m-1}).
\end{align}
It is clear that $$\left\|\alpha\right\|=\left\|\beta-V_i\right\|  < \frac{\delta}{2}.$$
Here  $ \alpha $  is an element of $ \mathbb {R}^{m-1} $ which is considered as a difference of two elements of $ \mathbb{R}^{m-1} $ and not as the difference of two sets of angles.
  By  \eqref{lemm1} and Condition  A4 it is the same for small $ \varepsilon. $

We write the Taylor expansion of  function $h$  at the point $ V_i.$
Then we have
\begin{align}
\nonumber
&h\left(\beta\right)=h\left(\beta_1, \ldots, \beta_{m-1}\right)=h\left(V_i^1+\alpha_1, V_i^2+\alpha_2, \ldots, V_i^{m-1}+\alpha_{m-1}\right)\\
\label{f6}
&=h\left(V_i\right)+\sum_{j=1}^{m-1}  \frac{\partial h\left(V_i\right)}{\partial x_j}\, \alpha_j + \sum_{1 \le l,s \le m}\frac{1}{2} \frac{\partial h\left(V_i\right)}{\partial x_l\partial x_s} \, \alpha_l\alpha_s
\\&+\sum_{1 \le l,s,t \le m}\frac{1}{6} \frac{\partial h\left(V_i+r_{\left(l,s,t\right)}\right)}{\partial x_l \partial x_s \partial x_t}\, \alpha_l \alpha_s \alpha_t,
\nonumber
\end{align}
where $r_{\left(l,s,t\right)} =c_{\left(l,s,t\right)} \cdot \left(\alpha_1, \ldots,\alpha_{m-1}\right),$ and $c_{\left(l,s,t\right)} \in \left(0,1\right)$ are constants depending on indices $l,s,t$ and on function $h.$ According Condition A4, $ V_i $ does not lie on the boundary of the definition domain  of  the continuous function $ h, $ therefore $\frac{\partial h(V_i)}{\partial x_j} =0$ for all $ j \in \{1, \ldots, m-1 \}. $ Hence, the linear term in  expansion (\ref{f6}) is equal to 0.

Consider the matrix
\begin{align}
\label{f5}
 A^i=\frac{1}{2}G_i,
\end{align} where $G_i$ are the same as in Condition  A6.
It is clear that the coefficient before $ \alpha_l \alpha_s $ in  (\ref{f6}) is $ a^i_{l, s} $
(the element of the matrix $ A^i$).Thus,
\begin{equation}
\label{f1}
\begin{split}
&h\left(\beta\right)=M+\sum\limits_{1 \le l,s \le m}a^i_{l,s} \alpha_l\alpha_s+\frac{1}{6}\sum_{1 \le l,s,t \le m} \frac{\partial h\left(V_i+r_{(l,s,t)}\right)}{\partial x_l \partial x_s \partial x_t} \alpha_l \alpha_s \alpha_t.
\end{split}
\end{equation}
Therefore,  condition (\ref{f4}) is equivalent to
\begin{equation}
\label{f2}
-\sum_{1 \le l,s \le m}a^i_{l,s} \alpha_l\alpha_s-\frac{1}{6}\sum_{1 \le l,s,t \le m} \frac{\partial h(V_i+r_{(l,s,t)})}{\partial x_l \partial x_s \partial x_t} \alpha_l \alpha_s \alpha_t \le \varepsilon.
\end{equation}
Under  conditions $ (\ref{f4}) $ and $ (\ref{f7}) $, we estimate the  third order terms  in this formula. Since  functions   $\frac{\partial h(V_i+r)}{\partial x_l \partial x_s \partial x_t} $ are continuous for $|r| \le \frac{\delta}{2},$  there exists $ M_1 $ such that  $\left|\frac{\partial h(V_i+r)}{\partial x_l \partial x_s \partial x_t}\right|$ does not exceed $ M_1 $ for all $ |r| \le \frac{\delta}{2}. $
Therefore, the following inequality holds true:
\begin{align}
&\left|\frac{1}{6} \frac{\partial h\left(V_i+r_{\left(l,s,t\right)}\right)}{\partial x_l \partial x_s \partial x_t} \alpha_l \alpha_s \alpha_t\right| \le M_1 \left|\alpha_l \alpha_s \alpha_t\right|
\label{f9}
\\
& \le M_1 \frac{\left|\alpha_l\right|^3+\left|\alpha_s\right|^3+\left|\alpha_t\right|^3}{3} \le M_1 \frac{\alpha_l^2+\alpha_s^2+\alpha_t^2}{3} S(\varepsilon).
\nonumber
\end{align}
The last inequality follows from  $ \| \alpha \|~ <~S(\varepsilon). $
Summing $ (\ref{f9}) $ over all triples  $ (l, s, t), $ we get the inequality:
\begin{equation}
\label{f10}
\left|\frac{1}{6}\sum_{1 \le l,s,t \le m} \frac{\partial h(V_i+r_{(l,s,t)})}{\partial x_l \partial x_s \partial x_t} \alpha_l \alpha_s \alpha_t\right| \le m^2M_1 S(\varepsilon)\sum_{s=1}^{m-1}\alpha_s^2 =M_2S(\varepsilon) \sum_{s=1}^{m-1}\alpha_s^2.
\end{equation}
Therefore,  the following estimate is valid for all $\|\alpha\|<S(\varepsilon)$:
\begin{align}
\label{f3}
&M_2S\left(\varepsilon\right) \sum\limits_{s=1}^{m-1}\alpha_s^2 -\sum\limits_{1 \le l,s \le m}a^i_{l,s} \,\alpha_l\alpha_s \ge
M-h\left(V_i+\alpha\right) \\
\nonumber
 &\ge -M_2S\left(\varepsilon\right) \sum\limits_{s=1}^{m-1}\alpha_s^2 -\sum\limits_{1 \le l,s \le m}a^i_{l,s}\, \alpha_l\alpha_s.
\end{align}
Collecting the results of  (\ref{f4}), (\ref{f2}) and (\ref{f3}), we obtain
\begin{align}
\nonumber
&\mathbb{P}\Big \lbrace \left(-\sum\limits_{1 \le l,s \le m-1}a^i_{l,s} \alpha_l \alpha_s -M_2 S\left(\varepsilon\right)\sum\limits_{s=1}^{m-1}\alpha_s^2\right) \le \varepsilon, \|\alpha\|<S(\varepsilon) \Big \rbrace \\
\label{f11}
& \ge  \mathbb{P}\lbrace h\left(\beta\right) >M-\varepsilon, \left\|V_i-\beta\right\| \le S\left(\varepsilon\right)\rbrace  \\
\nonumber
&\ge \mathbb{P}\Big\lbrace \left(-\sum\limits_{1 \le l,s \le m-1}a^i_{l,s}\alpha_l \alpha_s +M_2 S\left(\varepsilon\right)\sum\limits_{s=1}^{m-1}\alpha_s^2 \right)\le \varepsilon, \|\alpha\|<S(\varepsilon) \Big\rbrace.
\end{align}
Denote
\begin{align}
\label{f12}
A^i \left(\varepsilon\right)=
\begin{cases}
&A^i+M_2 S\left(\varepsilon\right)I_{m-1}, \text{ for } \varepsilon \ge 0,  \\
&A^i -M_2 S\left(-\varepsilon\right)I_{m-1}, \text{ for } \varepsilon \le 0,
\end{cases}
\end{align}
where $ A^{i} $ is the same as in   $ (\ref{f5}), $ and $ I_{m-1} $ is the identity matrix of size $ (m-1)\times (m-1). $
Then  inequality $ (\ref{f11}) $ may be rewritten using the scalar product $ \langle \, \cdot\, ,\, \cdot\, \rangle $ as
\begin{align}
\label{pr}
&\mathbb{P}\lbrace -\langle A^i\left(\,\varepsilon\right)\alpha, \alpha \rangle \le \varepsilon, \|\alpha\|<S(\varepsilon)\,\rbrace \ge \mathbb{P}\lbrace h\left(\beta\right) \ge M-\varepsilon, \left\|V_i-\beta\right\|\le S\left(\varepsilon\right)\rbrace  \\
\nonumber
&\ge \mathbb{P}\lbrace \, -\langle A^i\left(-\varepsilon\right)\alpha, \alpha \rangle \le \varepsilon, \|\alpha\|<S(\varepsilon)\,\rbrace.
\end{align}
Next, we  need the following lemma.
\begin{lemma}
\label{ll2}
There exist  constants $ \varepsilon_1> 0 $ and $ \lambda> 0 $ such that for any $ | \varepsilon | <\varepsilon_1 $ the matrix $ A^i(\varepsilon) $ is negative definite, and all its eigenvalues do not exceed $ - \lambda. $
\end{lemma}
\begin{proof}

It is  well known that a symmetric real valued matrix $ B $ is negative definite iff all the eigenvalues of the matrix $ B $ are negative (see, e.g., \cite[ \S. 4.1, p. 231, Th. 4.1.10]{MA}). Therefore, it is enough to prove that all eigenvalues of matrix $A^i(\varepsilon)$ do not exceed some $-\gamma<0$ for small $\varepsilon$ to prove Lemma \ref{ll2}.
 Denote by
\begin{align}
\label{oe}
\lambda_1(B) \le \ldots  \le \lambda_{s}(B)
\end{align} the eigenvalues of Hermitian matrix $B$ of  size $s \times s$.
The matrix $ A^i $ is negative semidefinite (see, e.g.,  \cite[ \S 4.5, p. 563]{n15}).   By Condition A6 and  $(\ref{f5})$   $\det\left(A^i\right) \ne 0, $ hence, the matrix $A^i$ is negative definite. Using notation  \eqref{oe} it is equivalent to $\lambda_{m-1}(A^i)<0.$

In what follows, we will need the Weyl theorem  formulated below. It may be found, e.g., in  \cite[\S 4.3, p. 239, Th. 4.3.1]{MA}.
\begin{theorem*}[Weyl]
Let $A,B $ be Hermitian matrices of size $s \times s$. Then using  notation  \eqref{oe} we have $$\lambda_i(A+B) \le \lambda_{i+j}(A)+\lambda_{s-j}(B) $$ for all $i \in \{1, \ldots, s\}, \, j \in \{0, \ldots, s-i\}.$
\end{theorem*}

Due to the Weyl theorem the eigenvalues of $A^i(\varepsilon)$ do not exceed $\lambda_{m-1}(A^i)+M_2S\left(\left|\varepsilon\right|\right).$ Therefore, we can put $\gamma=\frac{|\lambda_{m-1}(A^i)|}{2}$ and $\varepsilon_0$ such that  $S(\varepsilon_0)< \frac{|\lambda_{m-1}(A^i)|}{2M_2}.$ By \eqref{lemm1} such $\gamma$ and $\varepsilon_0$ satisfy the conditions of Lemma \ref{ll2}.
\end{proof}

\begin{rem}
Obviously, $$\lim\limits_{\varepsilon \rightarrow 0} \det(A^i(\varepsilon)) = \det(A^i). $$
\end{rem}

\begin{lemma}
\label{ll3}
There exist constants $ C, D> 0 $ such that if for some $ \varepsilon> 0 $ and $ \beta $ the following conditions are satisfied:
\begin{enumerate}
\item $h(\beta) > M - \varepsilon,$
\item $\|\alpha\|=\|V_i-\beta\|<S(\varepsilon),$
\item $\varepsilon<C,$
\end{enumerate}
then $ \| \alpha \| <D \sqrt{\varepsilon}. $
The notation $ \alpha $ and $ \beta $ are the same as above.
\end{lemma}
\begin{proof}
Denote by $ C = \min(\varepsilon_1, \varepsilon_0), $ where $ \varepsilon_1 $ is from Lemma \ref{ll2} and $ \varepsilon_0 $ is from  $ (\ref{f7}).$
According to the Rayleigh theorem (see, e.g., \cite[\S 4.2, p. 234, Th.4.2.2]{MA}) the inequality $\langle Bx,x \rangle  \le \lambda_s(B) \|x\|^2,$ where $\lambda_s(B)$ is from \eqref{oe},  holds for every Hermitian matrix $B$ of  size $s \times s$.
 Then for  any $ | \varepsilon | <C $ we have  $ \langle A^i (\varepsilon) x, x \rangle \le  -\gamma \| x \|^2, $ where $ \gamma $ is also from Lemma \ref{ll2}. Note that by  $ (\ref{f3}) $ the following inequality is valid for positive $\varepsilon$: $$ M + \langle A^i (\varepsilon) \alpha, \alpha \rangle \ge h(V_i + \alpha) > M - \varepsilon. $$ Therefore, $ \varepsilon \ge -\langle A^i(\varepsilon)\alpha,\alpha\rangle \ge \gamma\|\alpha\|^2.$
Hence, $ \| \alpha \|$ is less than $\frac{\sqrt{\varepsilon}}{\sqrt{ \gamma}}. $
\end{proof}
The following Corollary follows from  the proof of Lemma \ref{ll3}.
\begin{sled}
\label{sl3}
If $ \varepsilon \ge - \langle A^i (\varepsilon) \alpha, \alpha \rangle $ and $ 0 <\varepsilon <C, $ then $ \| \alpha \| \le D \sqrt{\varepsilon}. $
\end{sled}

Lemma \ref{ll3} completely proves the first assertion of Theorem \ref{t3}.
It remains to prove the second one. For this purpose we need to calculate $$\mathbb{P}\lbrace\langle-A^i\left(\pm\varepsilon\right) \alpha, \alpha\rangle \le \varepsilon, \|\alpha\|<S(\varepsilon) \rbrace$$ for small $ \varepsilon. $
By Corollary  \ref{sl3} condition $\|\alpha\|<S(\varepsilon),$ where $S(\varepsilon)$ is defined in  \eqref{lemm1}, follows from the inequality $\langle -A^i\left(\pm\varepsilon\right) \alpha, \alpha\rangle \le \varepsilon$ for sufficiently small $\varepsilon.$ Therefore, $$\mathbb{P}\lbrace\langle-A^i\left(\pm\varepsilon\right) \alpha, \alpha\rangle \le \varepsilon, \|\alpha\|<S(\varepsilon) \rbrace = \mathbb{P}\lbrace\langle-A^i\left(\pm\varepsilon\right) \alpha, \alpha\rangle \le \varepsilon \rbrace.$$

Below we assume  that $ 0 <\varepsilon <C, $ where $ C $ is the same as in Lemma \ref{ll3}.
\begin{lemma}
\label{ll4}
For sufficiently small $ \varepsilon $ there exists  $x^{*}(\varepsilon)\in [0,2\pi]^{m-1}$ such that  following equality holds:
\begin{align*}
\mathbb{P}\lbrace\langle-A^i\left(\pm\varepsilon\right) \alpha, \alpha\rangle \le \varepsilon \rbrace = \frac{\left(\pi \varepsilon\right)^{\frac{m-1}{2}}}{\Gamma\left(\frac{m+1}{2}\right)\sqrt{\det(-A^i(\varepsilon))}} \cdot \int\limits_0^{2\pi} \left[p(y) \prod\limits_{l=1}^{m-1} p(y +V_i^l+x^{*}_l(\varepsilon)) \right]\, dy,
\end{align*}
where $\|x^{*}(\varepsilon)\|\le D\sqrt{\varepsilon},$ and  constant  $D$ is introduced in Lemma \textup{\ref{ll3}.}
\end{lemma}
\begin{proof}
In order to simplify the formulas, we put $B=-A^i(\varepsilon)=(b_{i,j})_{i, j=1}^{m-1, m-1}.$  We also introduce $ \beta_0 = \angle xOU_1 $ to be the angle between the axis $ Ox $ and the vector $ OU_1, $ taken counterclockwise.
We have
\begin{align}
\nonumber
&\mathbb{P}\lbrace\langle-A^i\left(\pm\varepsilon\right) \alpha, \alpha\rangle \le \varepsilon \rbrace=\mathbb{P} \{\langle B\alpha, \alpha\rangle \le \varepsilon \} =\mathbb{P}\Big\lbrace\sum\limits_{1 \le s,t \le m-1} b_{s, t} \alpha_s \alpha_t \le \varepsilon \Big \rbrace \\
&=\int\limits_{0}^{2\pi} p(y) \int_{\mathbb{R}^{m-1}} {\bf 1} \Big\lbrace \sum\limits_{1 \le s,t \le m-1} b_{s, t} x_s x_t \le\varepsilon \Big \rbrace \prod_{l=1}^{m-1}\rho_l(x_l|\beta_0=y) \, dx_1 \cdots dx_{m-1} dy,
\label{f13}
\end{align}
where $\rho_l(x_l|\beta_0=y)$  is the conditional density of $ \alpha_l=x_l $  given $\beta_0=y. $ Taking into account that $ U_1, \ldots, U_m $ are independent random variables and using   $ (\ref{f17}) $ we obtain that $\rho_l(x_l|\beta_0=y)=p(y+V_i^l+x_l).$

Also note that in the general case the second integral is taken not over $ \mathbb{R}^{m-1} $ but over quotient space $ \mathbb{R}^{m-1}/_\sim, $ where $ x, y \in \mathbb{R}^{m-1} $ and $ x \sim y, $ if for any $ i \in \{1, \ldots, m-1 \} $ we have $ x^i- y^i = 2 \pi r $ for some $ r \in \mathbb{Z}. $ The reason  is that the values of $\alpha$ which differ by $2\pi r$ correspond to the same angle $\beta.$ But by Corollary  $ \ref{sl3} $, inequality $ \langle B \alpha, \alpha \rangle <\varepsilon $ implies  $ \| \alpha \| <D \sqrt{\varepsilon}. $ Therefore, for small $ \varepsilon, $ all the $ \alpha \in \mathbb{R}^{m-1} $ satisfying this inequality  correspond to different $ \beta. $
 Hence, for sufficiently small $ \varepsilon, $ we  may integrate indeed over $ \mathbb{R}^{m-1}. $ Using these facts, we continue  equalities (\ref{f13}):
\begin{align}
&\nonumber \mathbb{P}\lbrace\langle-A^i\left(\pm\varepsilon\right) \alpha, \alpha\rangle \le \varepsilon \rbrace\\
&\nonumber =\int\limits_{0}^{2\pi} p(y) \int_{\mathbb{R}^{m-1}} {\bf 1} \Big \lbrace \sum\limits_{1 \le s,t \le m-1} b_{s, t} x_s x_t \le\varepsilon \Big \rbrace \prod_{l=1}^{m-1}p(y+V_i^l+x_l)\, dx_1 \cdots dx_{m-1} \, dy\\
&=\int_{\mathbb{R}^{m-1}} {\bf 1} \Big\lbrace \sum\limits_{1 \le s,t \le m-1} b_{s, t} x_s x_t \le\varepsilon \Big \rbrace\int\limits_{0}^{2\pi} p(y)  \prod_{l=1}^{m-1}p(y+V_i^l+x_l)\, dy \, dx_1 \cdots dx_{m-1}.
  \label{f14}
\end{align}
This formula is  obtained just by switching the   integration order. In order to calculate this integral, we define the set
\begin{equation}
 \label{set}
 T=\Big\lbrace x \in \mathbb{R}^{m-1}:  \sum_{1 \le s,t \le m-1} b_{s, t} x_s x_t \le \varepsilon \Big \rbrace= \lbrace x \in \mathbb{R}^{m-1}: \langle x, Bx \rangle \le \varepsilon  \rbrace.
 \end{equation}
  The set $T$ is an ellipsoid with center 0 and configuration matrix $(\frac{1}{\varepsilon}\cdot B)^{-1},$ see \cite[p. 97]{Ell}. It is well-known, that  the ellipsoid  with center 0 and symmetric positive semidefinite configuration matrix  $Q$ is defined by  $\{x \in \mathbb{R}^{p}: \langle x, Q^{-1}x \rangle \le 1\}$ and its volume is equal to $\frac{\pi^{\frac{p}{2}}}{\Gamma(\frac{p}{2}+1)\sqrt{\det{Q}}}.$ It is mentioned, e.g., in \cite[p. 103]{Ell}.
Therefore, the  Lebesgue measure of the set $ T $  satisfies the equality
\begin{align}
 \label{f18}
 \textup{mes}(T)=\frac{\left(\varepsilon \pi\right)^{\frac{m-1}{2}}}{\sqrt{\det(B)}\Gamma\left(\frac{m+1}{2}\right)}.
 \end{align}

In what follows, we will need a mean value theorem  formulated below. It may be found, e.g., in  \cite{n23}.
\begin{theorem*}[Mean value theorem]
Let $E \subset \mathbb{R}^n$ be a connected set of finite measure. If  function $ f $ is continuous and summable on $ E, $ then there exists $ c \in E $ such that $$\int_E f(x) \, dx=f(c)\cdot \textup{mes}(E).$$
\end{theorem*}

Using this theorem, we  obtain the equality:
\begin{align}
&\nonumber \int_{\mathbb{R}^{m-1}} {\bf 1}\Big \lbrace \sum\limits_{1 \le s,t \le m-1} b_{s, t} x_s x_t \le \varepsilon \Big \rbrace \int\limits_{0}^{2\pi} p(y)  \prod_{l=1}^{m-1}p(y+V^i_l+x_l) \, dy \,  dx_1 \cdots dx_{m-1}\\
 & \label{f15}=\int\limits_{0}^{2\pi} p(y)  \prod_{l=1}^{m-1}p(y+V^i_l+x^{*}_l(\varepsilon))\, dy \cdot \textup{mes}\left(T\right), \text{ where } x^{*}(\varepsilon) \in T.
\end{align}
Recall that the set  $ T $ defined in (\ref{set}). The condition $x^{*}(\varepsilon) \in T$ together with  (\ref{set}) implies $ \sum\limits_{1 \le s,t \le m-1} b_{s, t} x^{*}_s(\varepsilon) x^{*}_t(\varepsilon) \le \varepsilon.$ Therefore, Corollary \ref{sl3} implies   inequality $\|x^{*}(\varepsilon)\|<D\sqrt{\varepsilon},$  where the constant $ D $ is  introduced in Lemma \ref{ll3}. Hence, collecting formulas (\ref{f14}), \eqref{f18}, (\ref{f15}) , we  finish the proof.
\end{proof}

Substituting the result of Lemma \ref{ll4} into inequality $ (\ref{pr}), $ we get  for small $ \varepsilon $ the relation
\begin{align*}
&\frac{\left(\pi \varepsilon\right)^{\frac{m-1}{2}}}{\Gamma\left(\frac{m+1}{2}\right)\sqrt{\det(-A^i(\varepsilon))}} \cdot \int\limits_0^{2\pi} \left[p(y) \prod\limits_{l=1}^{m-1} p\left(y+V_i^l+x^{*}_l(\varepsilon)\right) \right]\, dy \\
&\ge \mathbb{P}\{h(\beta) \ge M-\varepsilon, \|V_i-\beta\| \le S(\varepsilon )\}  \\
& \ge\frac{\left(\pi \varepsilon\right)^{\frac{m-1}{2}}}{\Gamma\left(\frac{m+1}{2}\right)\sqrt{\det(-A^i(-\varepsilon))}} \cdot \int\limits_0^{2\pi} \left[p(y) \prod\limits_{l=1}^{m-1} p\left(y+V_i^l+x^{*}_l(-\varepsilon)\right) \right]\, dy.
\end{align*}

Dividing both sides of this inequality by $ \varepsilon^{\frac{m-1}{2}} $ and tending $ \varepsilon $ to 0,  we get
\begin{align}
&\lim_{\varepsilon \rightarrow +0}\frac{\pi^{\frac{m-1}{2}}}{\Gamma\left(\frac{m+1}{2}\right)\sqrt{\det(-A^i(\varepsilon))}} \cdot \int\limits_0^{2\pi} \left[p(y) \prod\limits_{l=1}^{m-1} p\left(y+V_i^l+x^{*}_l(\varepsilon)\right) \right]\, dy
\label{f30}\\
&\nonumber \ge \lim_{\varepsilon \rightarrow +0}\varepsilon^{-\frac{m-1}{2}}\mathbb{P}\{h(\beta) \ge M-\varepsilon, \|V_i-\beta\|\le S(\varepsilon)\}  \\
&\nonumber \ge \lim_{\varepsilon \rightarrow +0} \frac{\pi^{\frac{m-1}{2}}}{\Gamma\left(\frac{m+1}{2}\right)\sqrt{\det(-A^i(-\varepsilon))}} \cdot \int\limits_0^{2\pi} \left[p(y) \prod\limits_{l=1}^{m-1} p\left(y+V_i^l+x^{*}_l(-\varepsilon)\right) \right]\, d\beta_0.
\end{align}
Using that
\begin{align*}
&\lim_{\varepsilon \rightarrow 0} \det(A^i(\varepsilon)) =\det(A^i),\\ &\lim_{\varepsilon \rightarrow 0} \int\limits_0^{2\pi} \left[p(y) \prod\limits_{l=1}^{m-1} p\left(y+V_i^l+x^{*}_l(\varepsilon)\right) \right]\, dy =  \int\limits_0^{2\pi} \left[p(y) \prod\limits_{l=1}^{m-1} p\left(y+V_i^l\right) \right] \,dy,
\end{align*}
 we pass to the limit in \eqref{f30} and obtain
\begin{align}
\label{eq}
&\lim_{\varepsilon \rightarrow +0}\varepsilon^{-\frac{m-1}{2}}\mathbb{P}\{h(\beta) \ge M-\varepsilon, \|V_i-\beta\| \le S(\varepsilon)\} \\
\nonumber
 &=\frac{\pi^{\frac{m-1}{2}}}{\Gamma\left(\frac{m+1}{2}\right)\sqrt{\det(-A^i)}} \cdot \int\limits_0^{2\pi} \left[p(y) \prod\limits_{l=1}^{m-1} p\left(y+V_i^l\right) \right]\, dy.
\end{align}
From  $ (\ref{f5}) $ we may conclude that \begin{align}
\label{f16}
\det(-A^i)=\frac{\det(-G_i)}{2^{m-1}}.
\end{align}
Now, using  $ (\ref{eq}), (\ref{sum}) $ and $ (\ref{f16}), $ we obtain
\begin{align*}
&\lim_{\varepsilon \rightarrow 0+}\varepsilon^{-\frac{m-1}{2}}\mathbb{P}\{h(\beta) \ge M-\varepsilon\} =  \frac{\pi^{\frac{m-1}{2}}}{\Gamma\left(\frac{m+1}{2}\right)}\sum\limits_{i=1}^k \frac{\int\limits_0^{2\pi} \left[p(y) \prod\limits_{l=1}^{m-1} p\left(y+V_i^l\right) \right] \,dy}{\sqrt{\det(-A^i)}} \\
&=\frac{\left(2\pi\right)^{\frac{m-1}{2}}}{\Gamma\left(\frac{m+1}{2}\right)}\sum\limits_{i=1}^k \frac{\int\limits_0^{2\pi} \left[p(y) \prod\limits_{l=1}^{m-1} p\left(y+V_i^l\right) \right]\, dy}{\sqrt{\det(-G_i)}},
\end{align*}
and Theorem \ref{t3} is proved.
\end{proof}

\section{Proof of the general theorem: part 2}
\label{cont}

In this subsection, we prove the second part of our general theorem.
\begin{theorem}
\label{t4}
Suppose that  kernel $f$ and  points $U_1, \ldots, U_n$ satisfy Conditions  \textup{A} and \textup{B} which are listed in Section \ \ref{tt1}.
Let $H_n$ be  $U$-$\max$ statistics constructed via kernel $f$, that is,
$ H_n = \max\limits_{1 \le i_1< \ldots <i_m \le n} f(U_{i_1}, \ldots, U_{i_m}).$
Then for every  $t>0$ the following relation holds true:
\begin{equation}
\label{mm}
\lim_{n \rightarrow \infty} \mathbb{P}\lbrace n^{\frac{2m}{m-1}} (M-H_n)\le t)\rbrace=1 -e^{-\frac{t^{\frac{m-1}{2}}K}{m!}},
\end{equation}
where the constant $ K $ is introduced in condition 2 of Theorem \textup{\ref{t3}}.
The rate of convergence is $O(n^{-\frac{1}{m-1}})$ for  $m>1$ and   $O(n^{-1})$
for $m=1$.
\end{theorem}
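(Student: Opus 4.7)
The plan is to apply the Silverman--Brown machinery of Theorem \ref{SB} with the normalization $z_n(t) = M - t\,n^{-2m/(m-1)}$, so that $\varepsilon_n := M - z_n(t) = t\,n^{-2m/(m-1)} \to 0$. Part 2 of Theorem \ref{t3} immediately gives
$$p_{n,z_n(t)} = \mathbb{P}\{f(U_1,\ldots,U_m) \ge M - \varepsilon_n\} = K\,\varepsilon_n^{(m-1)/2}(1+o(1)) = \frac{K\,t^{(m-1)/2}}{n^m}(1+o(1)),$$
hence $\lambda_{n,z_n(t)} = \binom{n}{m} p_{n,z_n(t)} \to K\,t^{(m-1)/2}/m!$, which verifies \eqref{fff1} with $\lambda_t = K\,t^{(m-1)/2}/m!$.

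The heart of the argument is verifying condition \eqref{zz1}, namely
$n^{2m-r}\,p_{n,z_n(t)}\,\tau_{n,z_n(t)}(r) = n^{2m-r}\,\mathbb{P}\{f(\xi_1,\ldots,\xi_m)>z_n(t),\, f(\xi_{1+m-r},\ldots,\xi_{2m-r})>z_n(t)\} \to 0$
for every $r \in \{1,\ldots,m-1\}$. Here Part 1 of Theorem \ref{t3} is crucial: on the first event, the vector of central angles $\beta^{(1)}$ associated to $\xi_1,\ldots,\xi_m$ lies within Euclidean distance $D\sqrt{\varepsilon_n}$ of some maximum point $V_{i_1}$, and analogously $\beta^{(2)}$ lies within $D\sqrt{\varepsilon_n}$ of some $V_{i_2}$ on the second event. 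Conditioning on $\xi_1$, the positions of $\xi_2,\ldots,\xi_m$ are then confined to balls of radius $O(\sqrt{\varepsilon_n})$ determined by $\xi_1$ and $V_{i_1}$; since $p$ is bounded (by continuity and compactness of $S^1$), this contributes an upper bound $O(\varepsilon_n^{(m-1)/2})$. Given $\xi_1,\ldots,\xi_m$, the shared points $\xi_{1+m-r},\ldots,\xi_m$ are determined, so for the second event to occur the remaining $m-r$ points $\xi_{m+1},\ldots,\xi_{2m-r}$ must fall into prescribed $O(\sqrt{\varepsilon_n})$-balls, contributing another factor $O(\varepsilon_n^{(m-r)/2})$. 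Summing over the finitely many admissible pairs $(V_{i_1},V_{i_2})$ yields the joint bound
$$\mathbb{P}\{f(\xi_1,\ldots,\xi_m)>z_n(t),\, f(\xi_{1+m-r},\ldots,\xi_{2m-r})>z_n(t)\} \le C\,\varepsilon_n^{(2m-r-1)/2}.$$

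A direct computation then shows that $n^{2m-r}\cdot \varepsilon_n^{(2m-r-1)/2} = O\bigl(n^{(r-m)/(m-1)}\bigr)$, which vanishes as $n\to\infty$ for each $r \in \{1,\ldots,m-1\}$. Hence \eqref{zz1} is satisfied, and Theorem \ref{SB} delivers the limit $\lim_n \mathbb{P}(H_n \le z_n(t)) = e^{-\lambda_t}$, i.e.\ relation \eqref{mm}. For the rate of convergence, Remark \ref{rem2} says the error is of order $O(n^{-1}) + \sum_{r=1}^{m-1} O(n^{(r-m)/(m-1)})$; the largest term in the sum is at $r = m-1$, giving $O(n^{-1/(m-1)})$, which dominates $O(n^{-1})$ for $m>1$, while for $m=1$ the sum is empty and the rate is $O(n^{-1})$.

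I expect the main obstacle to be the careful bookkeeping in the joint probability estimate: one must handle the possibility $V_{i_1}\ne V_{i_2}$, confirm that the balls of radius $O(\sqrt{\varepsilon_n})$ around distinct maximum points are disjoint for small $\varepsilon_n$ (which follows from the finiteness in Condition~A4 together with \eqref{lemm1} proved in Theorem \ref{t3}), and account for the quotient $\mathbb{R}^{m-1}/\!\sim$ issue already addressed in Lemma \ref{ll4}. Once these are properly organized, the chain $\text{Theorem \ref{t3}} \Rightarrow \text{conditions \eqref{fff1},\eqref{zz1}} \Rightarrow \text{Theorem \ref{SB}}$ yields both the limit and the claimed rate.
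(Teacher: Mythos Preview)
Your proposal is correct and follows essentially the same route as the paper: the same normalization $z_n(t)=M-tn^{-2m/(m-1)}$, the same verification of \eqref{fff1} via part~2 of Theorem~\ref{t3}, and the same localization argument (part~1 of Theorem~\ref{t3}) to bound the joint probability in \eqref{zz1} by $C\varepsilon_n^{(2m-r-1)/2}$, yielding $n^{2m-r}p_{n,z_n(t)}\tau_{n,z_n(t)}(r)=O(n^{(r-m)/(m-1)})$. The paper packages the joint-probability estimate as a separate lemma (Lemma~\ref{ll5}) with the same counting of constrained coordinates, and derives the convergence rate from Remark~\ref{rem2} exactly as you outline.
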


\begin{proof}
For any $ t> 0 $ we define the transformation$$ z_n(t)=M-tn^{-\frac{2m}{m-1}} .$$
Let us consider $\lambda_{n, z_n(t)}$ defined in Theorem \ref{LM}. Then
$$\lambda_{n,z_n(t)} =\frac{n!}{m! (n-m)!}\mathbb{P}\{f(U_1, \ldots, U_m)>z_n(t)\}.$$
We put $\varepsilon=tn^{-\frac{2m}{m-1}}, $ then $n^m \varepsilon^{\frac{m-1}{2}} = t^{\frac{m-1}{2}}.$
Let us prove the fulfillment of Condition (\ref{fff1}) of  Theorem \ref{SB} (Silverman–-Brown Theorem). We write:
\begin{align*}
&\lim_{n \rightarrow \infty} \lambda_{n,z_n(t)} = \lim_{n \rightarrow \infty} \frac{n!}{m! (n-m)!}\mathbb{P}\{f(U_1, \ldots, U_m)>z_n(t)\} \\
&=\frac{1}{m!} \lim_{n \rightarrow \infty} \frac{n!}{n^m (n-m)!} n^m \varepsilon^{\frac{m-1}{2}} \varepsilon^{-\frac{m-1}{2}} \mathbb{P}\{f(U_1, \ldots, U_m)>\varepsilon\} \\
&=\frac{1}{m!} t^{\frac{m-1}{2}} \lim_{n \rightarrow \infty} (tn^{-\frac{2m}{m-1}})^{-\frac{m-1}{2}}\mathbb{P}\{f(U_1, \ldots, U_m)>M-tn^{-\frac{2m}{m-1}}\}\\
&=\frac{t^{\frac{m-1}{2}}K}{m!} = :\lambda_t>0.
\end{align*}

In the last line, we used the second statement of Theorem \ref{t3}. Now we will prove  Condition (\ref{zz1}) of Remark \ref{rem2}, which has the  form:
 $$\lim_{n \rightarrow \infty} n^{2m-r} p_{z_n(t)}\tau_{z_n(t)}(r)=0 \text{ for any } r \in \{1, \ldots, m-1 \}.$$ According to Remark \ref{rem2} Condition (\ref{fff2}) of Theorem \ref{SB} can be replaced by this one.
We formulate this statement as a separate lemma.

\begin{lemma}
\label{ll5}
 For each  $r \in \{1, \ldots, m-1 \}$ we have the following  relation: $$\lim_{n \rightarrow \infty}n^{2m-r}\mathbb{P}\{h(U_1, \ldots, U_m)>z_n(t), h(U_{1+m-r}, \ldots, U_{2m-r})>z_n(t)\}=0.$$
\end{lemma}

\begin{proof}
Let us introduce the following notation:
$\beta_i=\angle U_1OU_{i+1} \text{ for } i \in \{1, \ldots, 2m-r-1\},
\gamma_i=\angle U_{m-r+1}OU_{i+1} \text{ for } i \in \{m-r, \ldots, 2m-r-1\}.$
Such  a notation corresponds to  (\ref{bb1}) and (\ref{bet1}) for  each $i \in \{1, \ldots, m-1\}.$
It is clear that $\gamma_i=(\beta_i-\beta_{m-r}) \mod{2\pi}$ for any  $i \ge m.$
We introduce the  events
 $Q_{i,j}= \{\|V_i- (\beta_1, \ldots, \beta_{m-1})\| \le D\sqrt{\varepsilon}, \|V_j- (\gamma_{m-r}, \ldots, \gamma_{2m-r-1})\| \le D\sqrt{\varepsilon}\},$ where $V_i$ is the same as in Condition  A4   and constant $D$ is introduced in Theorem \ref{t3}.
 It follows from Lemma \ref{ll3}  that  for small $ z_n (t) $  the following equality holds true:
\begin{align}
&\nonumber\{ h(U_1, \ldots, U_m)\ge z_n(t)\cap h(U_{1+m-r}, \ldots, U_{2m-r})\ge z_n(t) \} \\
&=\cup_{ 1\le i,j \le k} \left(\left[h(U_1, \ldots, U_m)\ge z_n(t)\cap h(U_{1+m-r}, \ldots, U_{2m-r})\ge z_n(t)\right] \cap Q_{i,j}\right).
\label{f20}
\end{align}

Next, we estimate the  probability
 \begin{align}
\label{pp1}
\mathbb{P} \{\left[h(U_1, \ldots, U_m)\ge z_n(t)\cap h(U_{1+m-r}, \ldots, U_{2m-r})\ge z_n(t)\right] \cap Q_{i,j} \}.
\end{align}
By the definition  for all elements  $V_i$ from $Q_{i,j}$  we have the following bounds for $\beta_i$ and $\gamma_i:$ $\|\beta_{l}-V_i^l\| \le D\sqrt{\varepsilon}$ for each  $i<m,$ and  $\|\gamma_{l}-V_j^{l-m+r}\|\le D\sqrt{\varepsilon}.$

For $ l \ge m $ we obtain $$\left\|\beta_l-V_i^{m-r}-V_j^{l-m+r}\right\| \le \left\|\beta_l-\beta_{m-r}-V_j^{l-m+r}\right\|+\left\|\beta_{m-r}-V_i^{m-r}\right\| \le 2D\sqrt{\varepsilon}.$$ Denote by $ M_p $ the maximal value of  density $p(x).$  Using the properties of distribution   of $\beta_l,$  we can estimate  the upper bound of probability $(\ref{pp1})$ by
 $\left(2DM_p\sqrt{\varepsilon}\right)^{m-1} \left(4DM_p\sqrt{\varepsilon}\right)^{m-r}.$
Using   formula $(\ref{f20})$ and substituting $\varepsilon=tn^{-\frac{2m}{m-1}}$ in the estimate of (\ref{pp1}) we obtain the  inequality
\begin{multline*}
n^{2m-r}\mathbb{P}\{h(U_1, \ldots, U_m)\ge z_n(t), h(U_{1+m-r}, \ldots, U_{2m-r})\ge z_n(t)\}\\
 \le n^{2m-r}  k^2 \left(2DM_p\sqrt{tn^{-\frac{2m}{m-1}}}\right)^{m-1} \left(4DM_p\sqrt{tn^{-\frac{2m}{m-1}}}\right)^{m-r}=O(n^{-\frac{m-r}{m-1}})=o(1).\,
\end{multline*}
\end{proof}

Let us return to the proof of Theorem \ref{t4}. Now we may use Theorem \ref{SB}, since all its conditions are verified. Then according to  \eqref{fff3} we obtain
$$\lim_{n \rightarrow \infty} \mathbb{P}\left(H_n \le z_n(t)\right)=e^{-\lambda_t}$$
 for any $t \in T.$
Hence, $$\lim_{n \rightarrow \infty} \mathbb{P}\left(H_n \le M-t n^{-\frac{2m}{m-1}}\right)=e^{-\frac{t^{\frac{m-1}{2}} K}{m!}}.$$
Therefore, for any $ t> 0 $ the following relation is valid:
$$\lim_{n \rightarrow \infty} \mathbb{P}\{ n^{\frac{2m}{m-1}} (M-H_n)\le t) \}=1 -e^{-\frac{t^{\frac{m-1}{2}}K}{m!}}.$$

According to Remark \ref{rem2}, the convergence rate is  $O\left(n^{-1}+\sum_{r=1}^{m-1}p_{n,z_n(t)}\tau_{n,z_n(t)}(r)n^{2m-r}\right).$ From the proof of Lemma \ref{ll5} it follows that $O\left(p_{n,z_n(t)}\tau_{n,z_n(t)}(r)n^{2m-r}\right)=O\left(n^{-\frac{m-r}{m-1}}\right),$ therefore it is   $O\left(n^{-\frac{1}{m-1}}\right)$ for $ m> 1, $  and it is $ O\left(n^{- 1}\right)$ for $ m = 1 $
 Hence, Theorem \ref{t4} is proved.
\end{proof}

The combination of Theorems \ref{t3} and \ref{t4} implies our general Theorem \ref{t2}.

\section{Acknowledgements}
The authors would like to thank Dr. A. Yu. Zaitsev and  Dr. D. Zaporozhets  for their
invaluable help concerning this paper.

\bibliography{mybibfile}

\begin{thebibliography}{10}
\expandafter\ifx\csname url\endcsname\relax
  \def\url#1{\texttt{#1}}\fi
\expandafter\ifx\csname urlprefix\endcsname\relax\def\urlprefix{URL }\fi
\expandafter\ifx\csname href\endcsname\relax
  \def\href#1#2{#2} \def\path#1{#1}\fi

\bibitem{n1}
P.~Halmos, The theory of unbiased estimation, Annals of Mathematical Statistics
  17~(1) (1946) 34--43.
\newblock \href {https://doi.org/10.1214/aoms/1177731020}
  {\path{doi:10.1214/aoms/1177731020}}.

\bibitem{n2}
W.~Hoeffding, A class of statistics with asymptotically normal distribution,
  Annals of Mathematical Statistics 19~(3) (1948) 293--325.
\newblock \href {https://doi.org/10.1214/aoms/1177730196}
  {\path{doi:10.1214/aoms/1177730196}}.

\bibitem{n3}
V.~S. Korolyuk, Y.~V.{ Borovskikh}, Theory of {U}-statistics, Springer,
  Dordrecht, 1994.
\newblock \href {https://doi.org/10.1007/978-94-017-3515-5}
  {\path{doi:10.1007/978-94-017-3515-5}}.

\bibitem{n4}
A.~J. Lee, {U-}statistics: {T}heory and {P}ractice, Routledge, 2019.

\bibitem{n5}
W.~Lao, M.~Mayer, U-max-statistics, J. Multivariate Anal. 99 (2008) 2039--2052.
\newblock \href {https://doi.org/10.1016/j.jmva.2008.02.001}
  {\path{doi:10.1016/j.jmva.2008.02.001}}.

\bibitem{n6}
W.~Lao, Some weak limit laws for the diameter of random point sets in bounded
  regions, Ph.D. thesis, Karlsruhe (2010).

\bibitem{n9}
M.~Mayer, Random {D}iameters and {O}ther {$U$}-max-{S}tatistics, Ph.D. thesis,
  Bern University (2008).

\bibitem{LR}
R.~Lachieze-Rey, M.~Reitzner, {U}-statistics in stochastic geometry, in:
  Stochastic analysis for {P}oisson point processes, Vol.~7 of Bocconi Springer
  Ser., Bocconi Univ. Press, [place of publication not identified], 2016, pp.
  229--253.
\newblock \href {https://doi.org/10.1007/978-3-319-05233-5}
  {\path{doi:10.1007/978-3-319-05233-5}}.

\bibitem{n7}
A.~D. Barbour, L.~Holst, S.~Janson, Poisson {A}pproximation, Oxford University
  Press, London, 1992.
\newblock \href {https://doi.org/10.1002/bimj.4710350414}
  {\path{doi:10.1002/bimj.4710350414}}.

\bibitem{n8}
F.~Silverman, T.~Brown, Short distances, flat triangles, and {P}oisson limits,
  J. Appl. Probab. 15~(4) (1978) 815--825.
\newblock \href {https://doi.org/10.2307/3213436} {\path{doi:10.2307/3213436}}.

\bibitem{n17}
I.~M. Yaglom, V.~Boltyanskii, Convex figures, New York: Holt, Rinehart and
  Winston, 1961, transl. by P.J. Kelly and L.F. Walton.

\bibitem{n14}
E.~V. Koroleva, {Ya. Yu. Nikitin}, {$U$}-max-statistics and limit theorems for
  perimeters and areas of random polygons, J. Multivariate Anal. 127 (2014)
  98--111.
\newblock \href {https://doi.org/10.1016/j.jmva.2014.02.006}
  {\path{doi:10.1016/j.jmva.2014.02.006}}.

\bibitem{n21}
E.~N. Simarova, Limit theorems for generalized perimeters of random inscribed
  polygons {I} (in {R}ussian), Vestnik St. Petersb. Univ. Math. 65~(4), (in
  press) (2020).

\bibitem{n22}
E.~N. Simarova, Limit theorems for generalized perimeters of random inscribed
  polygons {II} (in {R}ussian), Vestnik St. Petersb. Univ. Math. 66~(1), (in
  press) (2021).

\bibitem{n20}
{Ya. Yu. Nikitin}, T.~A. Polevaya, Limit theorems for areas and perimeters of
  random inscribed and circumscribed polygons, Zap. Nauchn. Sem. POMI (in
  Russian) 486 (2019) 200--213.

\bibitem{DS}
K.~V. Mardia, P.~E. Jupp, Directional Statistics, Wiley, 2000.
\newblock \href {https://doi.org/10.1002/9780470316979}
  {\path{doi:10.1002/9780470316979}}.

\bibitem{n24}
R.~Alexander, K.~B. Stolarsky, Extremal problems of distance geometry related
  to energy integrals, Trans. Amer. Math. Soc. 193 (1974) 1--31.
\newblock \href {https://doi.org/10.1090/S0002-9947-1974-0350629-3}
  {\path{doi:10.1090/S0002-9947-1974-0350629-3}}.

\bibitem{MA}
R.~A. Horn, C.~R. Johnson, Matrix Analysis, Cambridge University Press, 2012.
\newblock \href {https://doi.org/10.1017/9781139020411}
  {\path{doi:10.1017/9781139020411}}.

\bibitem{n25}
{L. Fejes Toth}, On the sum of distances determined by a pointset, Acta
  Mathematica Hungarica 7~(3-4) (1956) 397--401.
\newblock \href {https://doi.org/10.1007/bf02020534}
  {\path{doi:10.1007/bf02020534}}.

\bibitem{n15}
V.~A. Zorich, Mathematical analysis {I}, Springer, Berlin, Heidelberg, 2015,
  transl. by R. Cooke and O. Paniagua.
\newblock \href {https://doi.org/10.1007/978-3-662-48792-1}
  {\path{doi:10.1007/978-3-662-48792-1}}.

\bibitem{Ell}
A.~Kurzhanski, I.~Valyi, Ellipsoidal Calculus for Estimation and Control,
  Birkhauser Basel, Springer Science+Business Media New York, 1997.

\bibitem{n23}
B.~Makarov, A.~Podkorytov, Real Analysis: Measures, Integrals and Applications,
  Springer-Verlag London, 2013.
\newblock \href {https://doi.org/10.1007/978-1-4471-5122-7}
  {\path{doi:10.1007/978-1-4471-5122-7}}.

\end{thebibliography}

\end{document}